\numberwithin{equation}{section}
\theoremstyle{plain}
\newtheorem{theorem}{Theorem}[section]
\newtheorem{corollary}[theorem]{Corollary}
\newtheorem{lemma}[theorem]{Lemma}
\newtheorem{proposition}[theorem]{Proposition}
\theoremstyle{definition}
\theoremstyle{remark}
\newtheorem{remark}[theorem]{Remark}
\newcommand{\IR}{\mathbb{R}}
\newcommand{\IP}{\mathbb{P}}
\newcommand{\IC}{\mathbb{C}}
\newcommand{\N}{\mathbb{N}}
\newcommand{\E}{\mathbb{E}}
\newcommand{\R}{\mathbb{R}}
\newcommand{\Z}{\mathbb{Z}}
\newcommand{\cL}{\mathcal{L}}
\renewcommand{\L}{\mathbbm{L}}
\newcommand{\C}{\mathbb{C}}
\newcommand{\erm}{\mathrm{e}}
\newcommand{\egaldistr}{\overset{(d)}{=}}
\renewcommand{\bar}[1]{\overline{#1}}
\newcommand{\dd}{\mathrm{d}}
\newcommand{\Var}{\mathbb{V}\mathrm{ar}}
\newcommand{\ind}[1]{\mathbbm{1}_{\left\{#1\right\}}}
\newcommand{\crochet}[1]{\left\langle #1 \right\rangle }
 \title{\bfseries A spiraling elephant random walk}
\author{Lucile Laulin\thanks{Modal’X, Université Paris Nanterre, CNRS UMR 9023, Nanterre France.} \and Bastien Mallein\thanks{Univ Toulouse, INSA Toulouse, CNRS, Institut de Mathématiques de Toulouse, F-31062 Toulouse Cedex 9, France.}{~}\thanks{Institut universitaire de France (IUF).}}
 \title{\bfseries Elephants explore in spirals sometimes}
\begin{document}

\maketitle

\begin{abstract}
We consider in this article an Elephant Random Walk evolving in the plane. Specifically, this is a reinforced stochastic process in which the $n$th step is given by a random rotation of one of the previous steps chosen uniformly at random. We obtain a central limit theorem for this process, which shows that the process follows a randomly rotated logarithmic spiral at large times, with Gaussian fluctuations.
\end{abstract}

\section{Introduction}

The elephant random walk (ERW) is a well-studied reinforcement process, introduced by Schütz and Trimper \cite{ScT} to investigate the influence of memory on random walk dynamics. In $\mathbb{Z}$, the process is constructed as follows. Given a memory parameter $p \in [0,1]$, the process starts from initial position $S_0 = 0$, then moves to $S_1 = 1$ at the first step\footnote{Alternatively, this step could be randomized, but by construction, flipping the initial path as the same effect as flipping the whole trajectory of the walk along the $x$-axis.}. At each subsequent time $n \geq 2$, the walker chooses uniformly at random one of its $n-1$ past steps. It then repeats this step with probability $p$, or with complementary probability $1-p$ moves one step in the opposite direction. In other words, the ERW is defined by induction by
\begin{equation}
  \label{eqn:defERW1}
  S_0 = 0, \quad S_1 = 1, \quad S_{n} = S_{n-1} + R_{n} (S_{I_n} - S_{I_n - 1}) \text{ for $n \geq 2$,}
\end{equation}
where $(R_n, n \geq 2)$ are i.i.d. Radamacher random variables with parameter $p$ that are
further independent from the sequence $(I_n, n \geq 2)$ of independent random variables with $I_k$ uniformly distributed on $\{1,\ldots,k-1\}$.

Provided that $p>1/2$, this dynamic can be rephrased as follows. At each step, the walker remembers, with probability $2p-1$, one of its past steps chosen uniformly at random, and repeats this step. With complementary probability $2(1-p)$, it performs an independent step equal to $1$ or $-1$. An analogous definition can be taken when $p<1/2$ considering $1-2p$ as the probability of performing the opposite remembered step. The parameter $p$ is usually referred to as the memory parameter of the ERW, but we see from the previous rephrasing the effective memory is more accurately described by $a := 2p-1$. The value $a=0$ (i.e. $p=1/2$) corresponds to the absence of memory, while $|a|$ measures the strength of the dependence on past increments. When $a>0$, the walker tends to repeat the chosen past step (“positive” memory), whereas for $a<0$ it tends to do the opposite (“negative” memory). In both cases, the memory is strong when $|a|$ is close to $1$.

From this perspective, the walk appears to exhibit long-range dependence, as the walk tends for example to reinforce previous directions when $a>0$ -- the ERW is a special case of \textit{step-reinforced random walk}. Consequently, the walker’s trajectory can deviate significantly from a simple random walk, leading to superdiffusive behavior when $a>1/2$ while for $a\in(-1,1/2)$ the process remains diffusive but with different variance compared to the classical case.

A natural generalisation for this process would be to define a version of the elephant random walk in dimension $d$. A $\mathbb{Z}^d$ version of this model was first introduced in \cite{BercuLaulin2019}, while the works \cite{CuL,Qin} further investigate its properties, notably recurrence and transience.
The process considered was the following: at each time $n$, after selecting uniformly at random a previous step, the walker either moves according to this step with probability $p$, or with probability $1-p$ moves towards one of the $2d-1$ other neighbour of $S_n$ chosen uniformly at random. It can be shown in this situation that the ERW undergoes a phase transition when the memory parameter crosses $p_c = \frac{2d+1}{4d}$. If $p \leq p_c$, the ERW is in the diffusive regime, while it drifts at some sub-linear rate if $p > p_c$. This behaviour is therefore very close to the one observed in dimension $1$.

The objective of this note is to show that a larger variety of behaviours may be observed in dimension greater than $1$, by modifying the above described evolution of the process: instead of moving to a uniformly chosen neighbour of $S_n$ with probability $1-p$, we consider a process in which, at each steps, the walker moves according to a random rotation of the previously chosen step. More precisely, let $(R_n, n \geq 2)$ be i.i.d. random rotations of $\R^d$ (i.e. random variables in $\mathcal{O}(\R^d)$) independent of the sequence $(I_n, n \geq 2)$ defined above, we then define the ERW in $\R^d$ as
\begin{equation}
  \label{eqn:defERWd}
    S_0 = (0,\ldots, 0), \quad S_1 = (1,0,\ldots, 0) \quad \text{and} \quad S_{n} = S_{n-1} + R_n(S_{I_{n}} - S_{I_{n}-1}).
\end{equation}
This model clearly generalizes the one considered above, since we no longer restrict the law of $R_n$ to satisfy $\IP(R_n = \mathrm{Id}) = p$ and $\cL\big(R_n | R_n \neq \mathrm{Id}\big)$ uniformly selected among the non-identity rotations that preserve the canonical base of $\R^d$.

In the present article, we restrict our attention to elephant random walks in the plane. For our purpose, it will be convenient to identify $\R^2$ with the complex plane $\C$ in the usual fashion, and we define the ERW in $\C$ as the following complex-valued stochastic process. Let $(\theta_n, n \geq 1)$ be i.i.d. $[0,2\pi)$-valued random variables, further independent from the sequence $(I_n, n \geq 2)$ defined above, we set
\begin{equation}
  \label{eqn:defERWC}
  S_0 = 0, \quad S_1 = 1 \quad \text{and}\quad S_n= S_{n-1} + (S_{I_n} - S_{I_n-1}) \erm^{i \theta_n}.
\end{equation}
Some sample trajectories of $S$ are represented in Figure~\ref{fig:ex} when $\theta_1$ is a.s. a constant.

\begin{figure}[ht]
\centering
  \begin{subfigure}[t]{.3\linewidth}
    \centering\includegraphics[width=.95\linewidth]{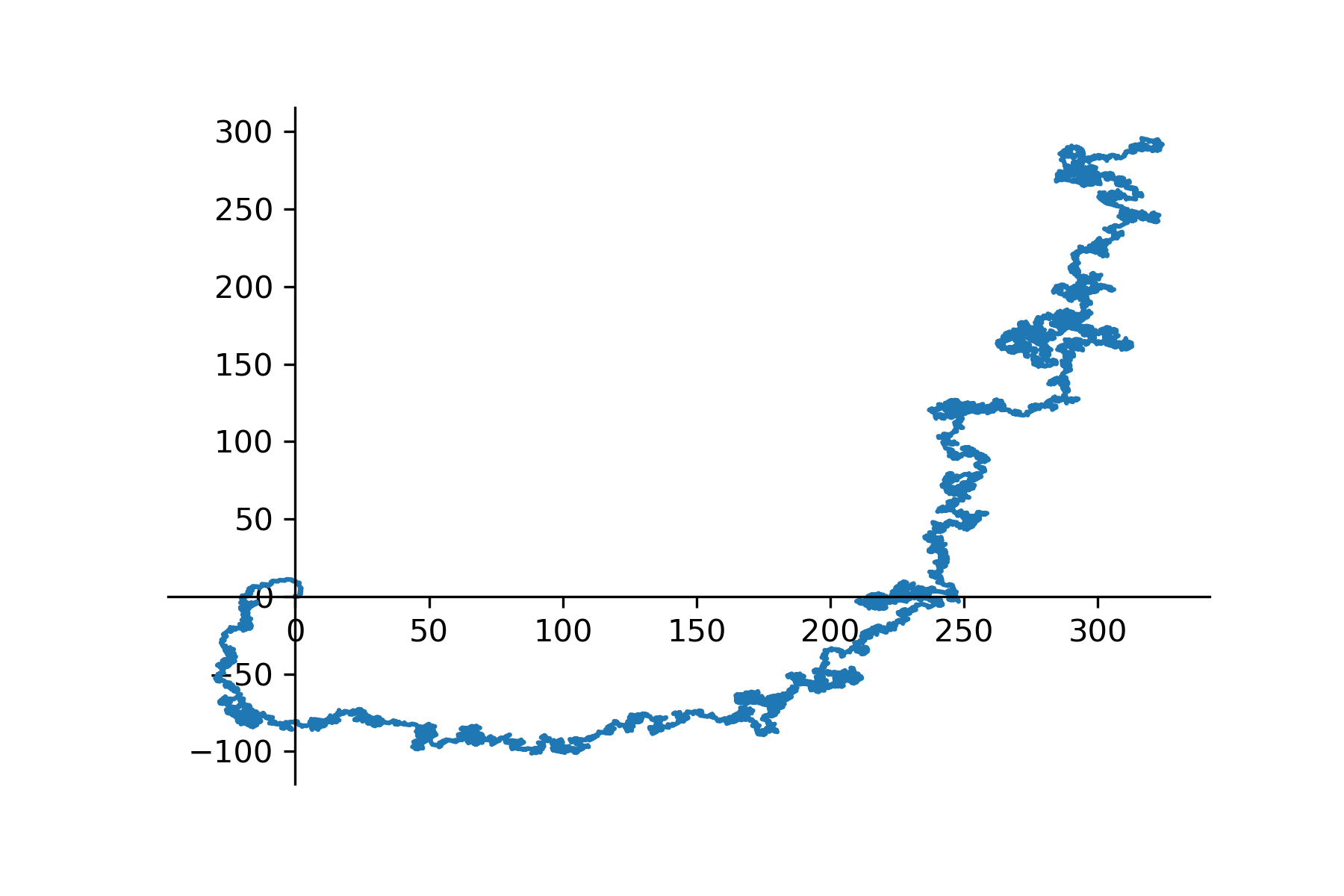}
    \caption{$\theta_1= \frac{\pi}{3} - 0.1$ a.s.}
  \end{subfigure}
  \begin{subfigure}[t]{.3\linewidth}
    \centering\includegraphics[width=.95\linewidth]{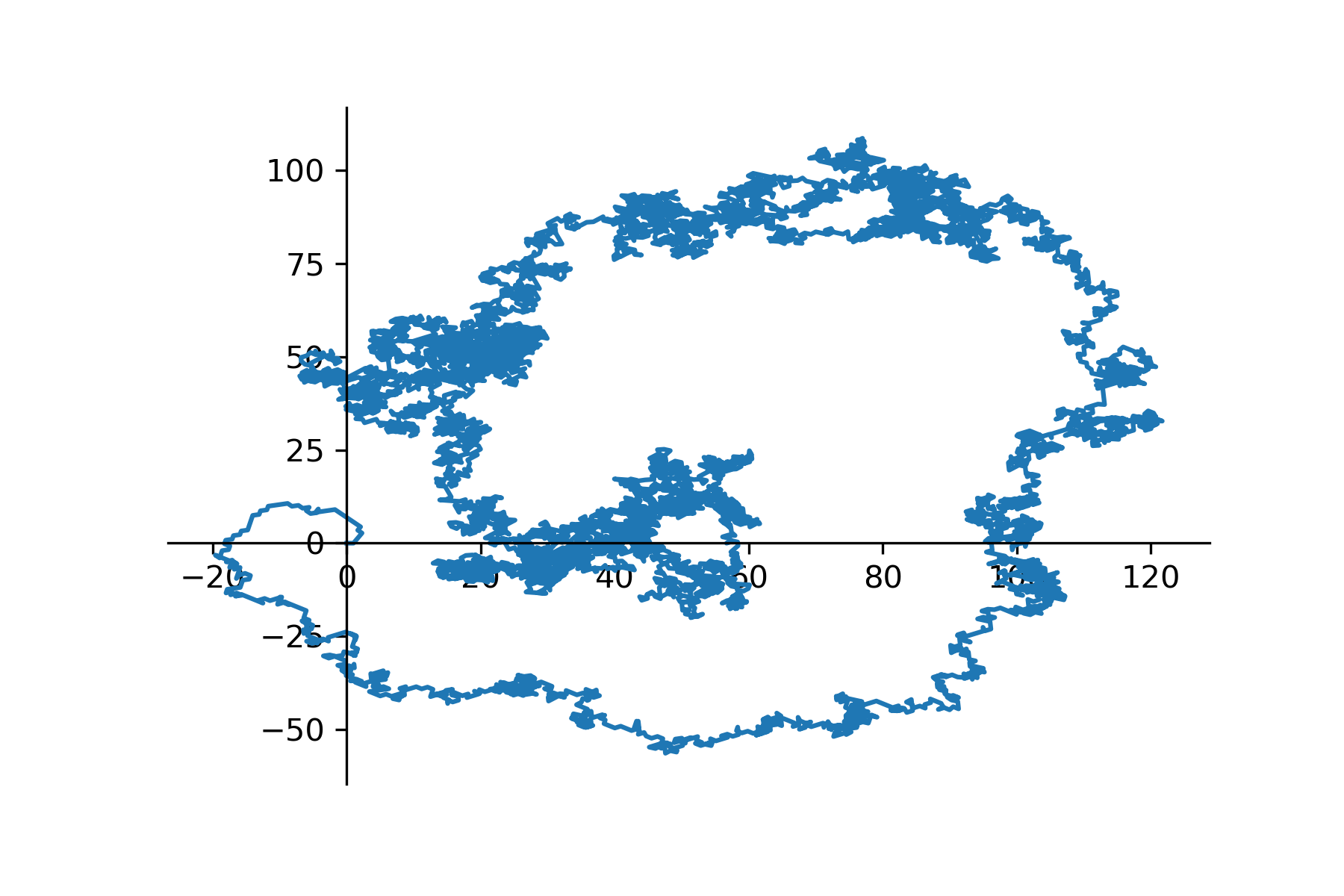}
    \caption{$\theta_1 = \frac{\pi}{3}$ a.s.}
  \end{subfigure}
  \begin{subfigure}[t]{.3\linewidth}
    \centering\includegraphics[width=.95\linewidth]{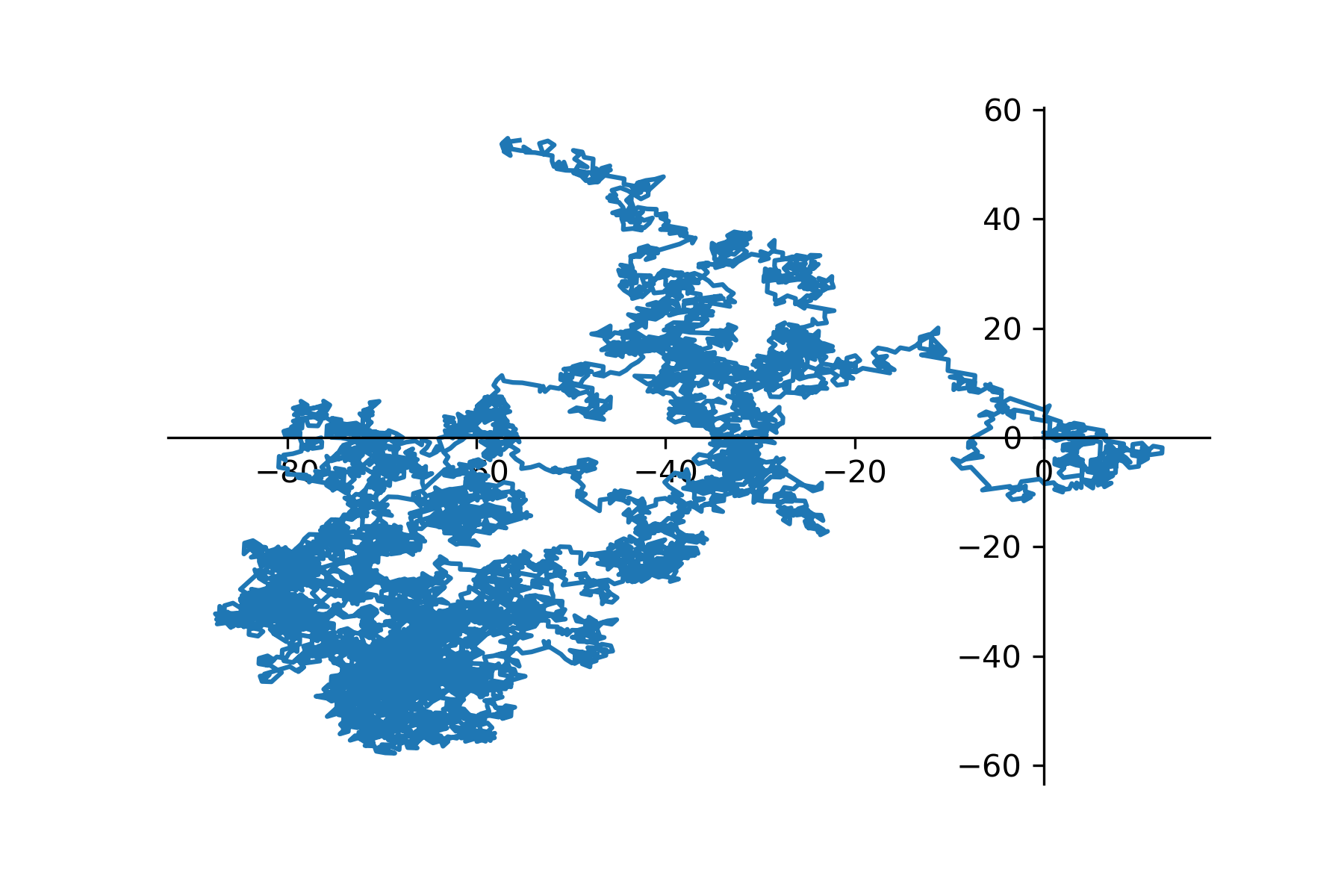}
    \caption{$\theta_1 = \frac{\pi}{3} + 0.1$ a.s.}
  \end{subfigure}
\caption{Sample path of $1000$ steps of an elephant random walks in $\R^2$ as defined in \eqref{eqn:defERWC}, in which $\theta_1$ is a.s. a constant.}
\label{fig:ex}
\end{figure}

The main result of this article is a central limit theorem for the the ERW $(S_n, n \geq 1)$. We show that the asymptotic behaviour of $S_n$ depends mainly of
\begin{equation}
  \Phi_k := \E(\erm^{i k \theta_1}), \quad \text{for $k \geq 1$},
\end{equation}
the Fourier coefficient of the law of $\theta_1$ and, more specifically,  of $\Phi_1$, that plays a role analogue to the memory parameter $a$ in dimension $1$. Indeed, when $d=1$ and $\theta_1$ takes values in $\{0,\pi\}$ with $\IP(\theta_1=0)=p$, one recovers $\Phi_1=2p-1=a$, the usual memory parameter. For $d>1$, $\Phi_1$ describes the average rotation applied to the remembered steps, therefore $|\Phi_1| \in [0,1]$ may roughly be thought of as the strength of that memory, minimal for the random walk $(\Phi_1=0)$, and maximal in the situation represented in Figure~\ref{fig:ex} in which $\Phi_1 \in \mathbb{S}^1$.

To avoid considering a unidimensional system, we always work in this article under the assumption
\begin{equation}
  \label{eqn:Nunidimensional}
  \IP(\theta \in \{0,\pi\}) < 1, \quad \text{or equivalently}\quad \Re(\Phi_2) < 1.
\end{equation}
For $\sigma^2 > 0$, we denote by $\mathcal{N}_\C(0,\sigma^2)$ the centred complex normal distribution, such that $N$ has law $\mathcal{N}_\C(0,\sigma^2)$ if $\Re(N)$ and $\Im(N)$, its real and imaginary parts, are independent real-valued centred normal random variables with variance $\sigma^2/2$.

The main result of this article is that the ERW in $\C$ exhibits a superdiffusive behaviour if $\Re(\Phi_1)>1/2$, while it remains diffusive when $\Re(\Phi_1)<1/2$. In addition, we show that when superdiffusive, the path of the ERW scales towards a randomly rotated logarithmic spiral.

\begin{theorem}
\label{thm:main}
Let $(\theta_n)$ be i.i.d. random variables on $[0,2\pi)$ satisfying \eqref{eqn:Nunidimensional}, and $(S_n, n \geq 1)$ the ERW defined by \eqref{eqn:defERWC}.
\begin{itemize}
  \item If $\Re(\Phi_1) < 1/2$, then writing $\sigma^2 = \frac{1}{1 - 2 \Re(\Phi_1)} \in (1/3,\infty)$, we have
  \begin{equation}
   \label{eqn:mainSub}
    \lim_{n \to \infty} \frac{S_n}{n^{1/2}} = \mathcal{N}_\C(0,\sigma^2) \quad \text{ in law}.
  \end{equation}
  \item If $\Re(\Phi_1)= 1/2$, then
  \begin{equation}
    \label{eqn:mainCrit}
    \lim_{n \to \infty} \frac{S_n}{(n \log n)^{1/2}} = \mathcal{N}_\C(0, 1)\quad \text{ in law}.
  \end{equation}
  \item If $\Re(\Phi_1)> 1/2$, there exists a $\C$-valued random variable $W$ such that, writing $\sigma^2 =\frac{1}{2 \Re(\Phi_1) - 1} \in(1,\infty)$, we have
  \begin{equation}
    \label{eqn:mainSup}
    \lim_{n \to \infty} S_n\erm^{- \Phi_1 \log n} = W \quad \text{a.s. \quad and} \quad \lim_{n \to \infty} \frac{S_n  - \erm^{\Phi_1 \log n} W}{n^{1/2}} = \mathcal{N}_\C(0,\sigma^2) \quad \text{ in law}.
  \end{equation}
\end{itemize}
\end{theorem}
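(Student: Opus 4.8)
The plan is to follow the classical martingale approach to the elephant random walk, but carried out in the complex plane so that the rotation acts by multiplication by $\erm^{i\theta}$. Write $X_n := S_n - S_{n-1}$ for the $n$th increment, so that $X_1 = 1$ and $X_n = X_{I_n}\erm^{i\theta_n}$ for $n\geq 2$. The first step is to compute the conditional expectation of the increment given the past $\cF_{n-1} := \sigma(X_1,\ldots,X_{n-1})$. Since $I_n$ is uniform on $\{1,\ldots,n-1\}$ and $\theta_n$ is independent of $\cF_{n-1}$ with $\E(\erm^{i\theta_n}) = \Phi_1$, we get
\begin{equation}
  \E(X_n \mid \cF_{n-1}) = \Phi_1\cdot\frac{1}{n-1}\sum_{k=1}^{n-1} X_k = \frac{\Phi_1}{n-1}\,S_{n-1},
\end{equation}
hence $\E(S_n\mid\cF_{n-1}) = \left(1 + \frac{\Phi_1}{n-1}\right)S_{n-1}$. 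This identifies the deterministic normalizing sequence: setting $a_n := \prod_{k=2}^{n}\left(1+\frac{\Phi_1}{k-1}\right)^{-1}$, which behaves like $\Gamma(\Phi_1+1)\,n^{-\Phi_1}$ as $n\to\infty$ (so $|a_n|\asymp n^{-\Re(\Phi_1)}$ and $a_n \sim c\,\erm^{-\Phi_1\log n}$), the process $M_n := a_n S_n$ is a complex-valued $\cF_n$-martingale. The logarithmic spiral is already visible here: $\erm^{\Phi_1\log n}$ rotates by $\Im(\Phi_1)\log n$ while scaling by $n^{\Re(\Phi_1)}$.

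Next I would control the $L^2$ behaviour of $M_n$. Because the steps are unit vectors, $|X_n| = 1$ and so the increment $M_n - M_{n-1} = a_n(S_n - S_{n-1}) - (a_{n-1}-a_n)S_{n-1}$ has a variance that is straightforward to estimate; more precisely one computes $\E\bigl(|M_n - M_{n-1}|^2\mid\cF_{n-1}\bigr)$ and uses $\E(|X_n|^2)=1$ together with $\E(X_n^2\mid\cF_{n-1}) = \Phi_2\,(n-1)^{-1}\sum_{k<n}X_k^2$. The assumption \eqref{eqn:Nunidimensional}, i.e. $\Re(\Phi_2)<1$, is exactly what guarantees that the auxiliary process $\sum_k X_k^2$ (itself satisfying an analogous recursion with $\Phi_2$ in place of $\Phi_1$) does not grow fast enough to disturb the second-moment asymptotics; one shows $\sum_{k=1}^n X_k^2 = O(n^{\max(\Re(\Phi_2),1/2)+\eps}) = o(n)$ almost surely and in $L^1$. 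From this, $\E(|M_n|^2)$ is computed by telescoping: it converges if $\Re(\Phi_1)>1/2$, grows like $\log n$ if $\Re(\Phi_1)=1/2$, and grows like $n^{1-2\Re(\Phi_1)}$ if $\Re(\Phi_1)<1/2$. In the superdiffusive regime $\Re(\Phi_1)>1/2$, $M_n$ is an $L^2$-bounded martingale, hence converges a.s. and in $L^2$ to a limit $M_\infty$; since $a_n \sim c\erm^{-\Phi_1\log n}$, rescaling and absorbing the constant $c$ into the limit yields $S_n\erm^{-\Phi_1\log n}\to W$ a.s., proving the first assertion of \eqref{eqn:mainSup}.

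For the Gaussian fluctuations I would apply the martingale central limit theorem to a suitably renormalized martingale. In the diffusive and critical regimes one applies it directly to $S_n/b_n$ with $b_n = n^{1/2}$ or $(n\log n)^{1/2}$, after checking (i) the conditional-variance (bracket) convergence $b_n^{-2}\sum_{k\leq n}\E(|X_k - \E(X_k\mid\cF_{k-1})|^2\mid\cF_{k-1}) \to \sigma^2$ — where the value $\sigma^2 = 1/(1-2\Re(\Phi_1))$ comes out of the telescoping sum — together with the analogous statement for the "pseudo-variance" $\E((S_k - \E S_k)^2)$ which vanishes after normalization (this is what forces the two real components to become independent with equal variance, i.e. the \emph{complex} normal law), and (ii) a Lindeberg condition, which is immediate since all increments are bounded by a constant. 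In the superdiffusive regime one instead works with the martingale $N_n := \sum_{k=2}^{n} a_{k}\bigl(X_k - \E(X_k\mid\cF_{k-1})\bigr)$ truncated appropriately, or equivalently analyzes $S_n - \erm^{\Phi_1\log n}W$ directly; here $S_n - a_n^{-1}M_\infty = a_n^{-1}(M_n - M_\infty) = -a_n^{-1}\sum_{k>n}(M_{k}-M_{k-1})$, and a CLT for this tail sum (again via the martingale CLT, now run "backwards" or via a triangular-array argument with the bracket computed on indices $k>n$) gives the Gaussian limit with $\sigma^2 = 1/(2\Re(\Phi_1)-1)$. The main obstacle I anticipate is the bookkeeping around the complex/bracket structure: unlike the one-dimensional ERW one must track both $\E(|M_n - M_{n-1}|^2\mid\cF_{n-1})$ and $\E((M_n-M_{n-1})^2\mid\cF_{n-1})$ (the latter controlled by $\Phi_2$ and the auxiliary $\sum X_k^2$ process), and show the second is negligible after normalization so that the limit is genuinely $\mathcal{N}_\C$ rather than a degenerate or elliptical Gaussian — this is where hypothesis \eqref{eqn:Nunidimensional} does the essential work, and getting the a.s. control of $\sum_{k\le n} X_k^2$ with the right exponent is the technical crux.
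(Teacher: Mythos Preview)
Your approach is correct and, for the diffusive and critical regimes $\Re(\Phi_1)\le 1/2$, essentially identical to the paper's: both use the complex martingale $M_n=S_n/\prod_{j<n}(1+\Phi_1/j)$, compute the two brackets $\crochet{M,\bar M}_n$ and $\crochet{M}_n$, show the second is negligible via the auxiliary process $S_n^{(2)}=\sum_k X_k^2$ (where \eqref{eqn:Nunidimensional} enters through $\Re(\Phi_2)<1$), check Lindeberg, and apply the vector-martingale CLT.

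For the superdiffusive regime the paper takes a genuinely different route. You propose to stay in discrete time: the $L^2$-bounded martingale gives $W$, and the fluctuations come from a triangular-array martingale CLT applied to the tail $a_n^{-1}\sum_{k>n}\Delta M_k$. The paper instead Poissonizes: it embeds the ERW into a continuous-time branching Markov process on the unit circle (each particle at position $x$ spawns at rate $1$ a child at $x\erm^{i\theta}$), so that $S_n$ is the additive functional $Z_1(t)=\sum_{u\in\mathcal N_t}X_u$ sampled at the $n$th jump time of a Yule process. The martingale $\erm^{-\Phi_1 t}Z_1(t)$ converges in $L^2$, giving $W$, and the CLT for $\erm^{-t/2}(Z_1(t)-\erm^{\Phi_1 t}W)$ is obtained via the smoothing identity $W=\sum_{u\in\mathcal N_t}\erm^{-\Phi_1 t}X_uW^{(u)}$ and a direct Fourier-transform computation, then de-Poissonized. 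The paper explicitly remarks that your discrete method would equally yield \eqref{eqn:mainSup}; it chose the branching route to highlight the connection with P\'olya urns and Biggins-type martingales at complex parameters. Your approach is more self-contained and avoids the de-Poissonization step; theirs delivers closed-form moments of $W$ (for instance $\E(W^2)=2\Phi_1/(2\Phi_1-\Phi_2)$) directly from the branching recursion.
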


In order to provide variety to this article, we propose to prove Theorem~\ref{thm:main} in two different ways. We study the regime $\Re(\Phi_1) \leq 1/2$ in Section~\ref{sec:diffusive} using Lindeberg's central limit theorem. The key observation in this section is that
\begin{equation}
  \label{eqn:defMartingale}
  \left( \frac{S_n}{\prod_{j=1}^n \left( 1 + \frac{\Phi_1}{j} \right)}, n\geq 1 \right) \text{ is a martingale,}
\end{equation}
its asymptotic behaviour can be deduced from studying its quadratic variation. Up to the difficulty of dealing with a complex martingale, the argument there follows closely standard methods in dimension 1 or $d$ \cite{BaB,BercuLaulin2019}.

We then turn to the regime $\Re(\Phi_1)> 1/2$ in Section~\ref{sec:spiraling}. We use there the known connection between ERW and P\'olya urns, which was notably exploited by Baur and Bertoin in \cite{BaB} to obtain a functional central limit theorem, as well as the connection between P\'olya urns and multitype Galton-Watson processes, that was exploited by Janson~\cite{Jan} to obtain asymptotics on the behaviour of these urns. These results cannot be applied immediately as we do not necessarily have a finite number of colors\footnote{Understand here directions that steps of the ERW might align with.}. P\'olya urns with infinite number have been already studied in the literature \cite{JMV} however, considering that we are only interested with a single martingale of that process, the analysis can be simplified. Specifically, borrowing arguments from the study of the additive martingales of branching random walks at complex parameters \cite{IKM} will allow us to conclude.

Before turning to the proofs though, we begin by stating explicitly the behaviour of the ERW in $\Z^2$. This process is parametrized by four positive numbers $p,q,r,s$ such that $p+q+r+s=1$.  These parameters correspond respectively to the probability for a walker oriented according to one of its uniformly sampled previous steps to move forward (repeating the step), to the left (making a rotation of angle $\pi/2$), backward (making a rotation of angle $\pi$) or to the right (making a rotation of angle $3\pi/2$). The case $q=r=s=(1-p)/3$ reduces to the previously studied standard ERW in the plane.

In this model, remark that $\Phi_1= p - r + i (q-s)$, therefore Theorem~\ref{thm:main} implies the following behaviour for $(S_n, n \geq 1)$, depending on the sign of $p-r-1/2$.
\begin{corollary}
Let $(S_n, n \geq 1) $ be the ERW in $\Z^2$ with parameters $p,q,r,s$ defined above.
\begin{itemize}
  \item If $p < r+1/2$, then $\lim_{n \to \infty} \frac{S_n}{n^{1/2}} = \mathcal{N}(0, \frac{1}{2 - 4 (p-r)} I_2)$ in law.
  \item If $p=r+1/2$, then $\lim_{n \to \infty} \frac{S_n}{(n\log )^{1/2}}= \mathcal{N}(0, \frac{1}{2} I_2)$ in law.
  \item If $p > r+1/2$, then there exists a random vector $W \in \R^2$ such that
  \begin{multline*}
    \lim_{n \to \infty}  \frac{\mathcal{R}_{-(q-s)\log n}S_n}{n^{p-r}} = W \quad \text{a.s.} \\ \text{and}\quad \lim_{n \to \infty} \frac{S_n - n^{p-r} \mathcal{R}_{(q-s)\log n}W}{n^{1/2}} = \mathcal{N}(0,\frac{1}{4(p-r)-2}I_2) \quad \text{ in law},
  \end{multline*}
  writing $\mathcal{R}_\theta$ for a rotation of angle $\theta$.
\end{itemize}
\end{corollary}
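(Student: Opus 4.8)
The plan is to identify the ERW in $\Z^2$ with parameters $p,q,r,s$ as a particular case of the complex-valued process \eqref{eqn:defERWC}, and then to transcribe the three regimes of Theorem~\ref{thm:main} into $\R^2$ notation.

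First I would let $(\theta_n, n\geq 1)$ be i.i.d., independent of $(I_n)$, with $\theta_1$ taking the values $0,\pi/2,\pi,3\pi/2$ with respective probabilities $p,q,r,s$. Under the usual identification of $\R^2$ with $\C$ sending a rotation of angle $\pi/2$ to multiplication by $i$, the defining recursion \eqref{eqn:defERWd} of the $\Z^2$ model becomes exactly \eqref{eqn:defERWC}, so $(S_n)$ is the complex ERW associated with this $\theta_1$. I would then check hypothesis \eqref{eqn:Nunidimensional}: since $q,s>0$ we have $\IP(\theta_1\in\{0,\pi\})=p+r<1$, so Theorem~\ref{thm:main} applies. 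Finally a one-line computation gives $\Phi_1=\E(\erm^{i\theta_1})=p\cdot 1+q\cdot i+r\cdot(-1)+s\cdot(-i)=(p-r)+i(q-s)$, hence $\Re(\Phi_1)=p-r$, which explains why the phase transition occurs at $p-r=1/2$.

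It then only remains to rewrite the conclusions of Theorem~\ref{thm:main}. For the Gaussian limits I would use that a random variable $N\sim\mathcal{N}_\C(0,\sigma^2)$, read as an element of $\R^2$, has law $\mathcal{N}(0,\tfrac{\sigma^2}{2}I_2)$ by definition of the complex normal distribution; substituting $\sigma^2=\tfrac{1}{1-2(p-r)}$, $\sigma^2=1$ and $\sigma^2=\tfrac{1}{2(p-r)-1}$ in the three cases yields the covariance matrices $\tfrac{1}{2-4(p-r)}I_2$, $\tfrac12 I_2$ and $\tfrac{1}{4(p-r)-2}I_2$ claimed in the corollary. For the superdiffusive normalization I would note that multiplication by $\erm^{\Phi_1\log n}$ in $\C$ is, in $\R^2$ terms, the composition of the scaling by $|\erm^{\Phi_1\log n}|=n^{\Re(\Phi_1)}=n^{p-r}$ with the rotation $\mathcal{R}_{\Im(\Phi_1)\log n}=\mathcal{R}_{(q-s)\log n}$; thus $\erm^{\Phi_1\log n}W=n^{p-r}\mathcal{R}_{(q-s)\log n}W$ and, similarly, $S_n\erm^{-\Phi_1\log n}=n^{-(p-r)}\mathcal{R}_{-(q-s)\log n}S_n$. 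Plugging these two identities into \eqref{eqn:mainSup} gives exactly the two displayed limits of the corollary, with $W$ the $\R^2$-valued random variable provided by Theorem~\ref{thm:main}.

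There is no substantial obstacle here, since the statement is a direct specialization of Theorem~\ref{thm:main}; the only thing to be careful about is the bookkeeping between complex multiplication and its polar (scaling $\times$ rotation) decomposition — in particular the sign of the rotation angle and the factor $\tfrac12$ relating the variance of a complex Gaussian to the common variance of its real and imaginary parts.
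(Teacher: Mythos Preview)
Your proposal is correct and matches the paper's approach: the paper does not give a separate proof of the corollary but simply notes that $\Phi_1=p-r+i(q-s)$ and that Theorem~\ref{thm:main} then yields the stated trichotomy. Your write-up is in fact more explicit than the paper, spelling out the check of \eqref{eqn:Nunidimensional} and the dictionary between complex multiplication/complex Gaussians and rotations/$\R^2$-Gaussians.
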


Finally, remark that the ERW in $\Z^2$ can be seen as a reinforced version of the Markov chains studied by Lopusanschi and Simon in \cite{LoS}, studying the asymptotic behaviour of the Lévy area of this path would therefore be of interest, especially studying the convergence of the ERW under a rough path topology.

\medskip
We now turn to the proof of Theorem~\ref{thm:main} in the two following sections. The proof of \eqref{eqn:mainSub} is given in the following section using Lindenberg's central limit theorem, with a sketch of the proof of \eqref{eqn:mainCrit}. The proof of \eqref{eqn:mainSup} is completed in Section~\ref{sec:spiraling}, using Poissonization methods and the analysis of a branching process.

\section{Elephant Random Walk in the diffusive regime}
\label{sec:diffusive}

We denote by $(S_n, n \geq 1)$ an ERW defined by \eqref{eqn:defERWC}, and we assume in this section that
\begin{equation}
  \label{eqn:cdSub}
  \Re(\Phi_1) = \Re\left(\E\left( \erm^{i\theta_1} \right)\right) < \frac{1}{2}.
\end{equation}
We prove in this section that
\[
  \lim_{n \to \infty} \frac{S_n}{n^{1/2}} = \mathcal{N}_\C\left(0,\frac{1}{1- 2 \Re(\Phi_1)}\right) \quad \text{ in distribution.}
\]
The proof of \eqref{eqn:mainCrit} follows a similar path with minor modifications, and will be sketched at the end of the section. Let us mention that \eqref{eqn:mainSup} also could have be obtained using the same techniques as developed here.

As mentioned in the introduction, this proof relies on applying a martingale central limit theorem. We first show in the next section that up to scaling, the ERW in $\C$ may be thought of as a complex martingale. We then compute the quadratic variations of this martingale in Section~\ref{subsec:quadVar} before applying Lindeberg's central limit theorem to complete the proof in Section~\ref{subsec:lindeberg}.

\subsection{Discrete martingale structure}

Recall that $\Phi_k = \E(\erm^{i k \theta_1})$. We define, for $n \in \N$ and $k \geq 1$,
\begin{equation}
  \label{eqn:centring}
  a^{(k)}_n = \prod_{j=1}^{n-1} \left( 1 + \frac{\Phi_k}{j} \right),
\end{equation}
and we write for simplicity $a_n = a^{(1)}_n$. As a first step, we show that $M = (S_n/a_n, n \geq 1)$ is a complex martingale. However, with future application in mind, we prove this result for the ERW $S_n^{(k)}$, which is defined as in \eqref{eqn:defERWC}, replacing $\theta_1$ by $k \theta_1$.

\begin{lemma}
\label{lem:martingale}
Let $k \geq 1$, we denote by
\[
  S_{n}^{(k)} = \sum_{j=0}^{n-1} (S_{j+1} - S_j)^k.
\]
The process $(S_n^{(k)}/a_n^{(k)}, n \geq 1)$ is a martingale. Moreover, there exists $C_k > 0$ such that
\[
  \E\left( \left| \frac{S_n^{(k)}}{a_n^{(k)}}\right|^2 \right) \sim \begin{cases}  C_k & \text{ if } \Re(\Phi_k) > 1/2\\
  C_k \log n & \text{if} \Re(\Phi_k) = 1/2\\
  C_k n^{1 - 2 \Re(\Phi_k)} & \text{ if } \Re(\Phi_k) < 1/2 \end{cases} \quad \text{ as } n \to \infty.
\]
\end{lemma}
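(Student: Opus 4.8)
The plan is to first establish the martingale property by a direct conditional-expectation computation, then obtain the second-moment asymptotics from a recursion on $v_n := \E(|S_n^{(k)}/a_n^{(k)}|^2)$.

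First I would prove the martingale property. Let $\cF_n = \sigma(I_j, \theta_j : j \le n)$. Conditionally on $\cF_{n-1}$, the increment $S_n^{(k)} - S_{n-1}^{(k)} = (S_n - S_{n-1})^k = (S_{I_n} - S_{I_n - 1})^k \erm^{i k \theta_n}$, where $I_n$ is uniform on $\{1, \dots, n-1\}$ and $\theta_n$ is independent of everything. Taking conditional expectation,
\begin{equation*}
  \E\!\left( S_n^{(k)} - S_{n-1}^{(k)} \,\middle|\, \cF_{n-1} \right) = \Phi_k \cdot \frac{1}{n-1} \sum_{j=1}^{n-1} (S_j - S_{j-1})^k = \frac{\Phi_k}{n-1} S_{n-1}^{(k)},
\end{equation*}
so $\E(S_n^{(k)} \mid \cF_{n-1}) = \left(1 + \tfrac{\Phi_k}{n-1}\right) S_{n-1}^{(k)}$. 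Since $a_n^{(k)}/a_{n-1}^{(k)} = 1 + \tfrac{\Phi_k}{n-1}$, this is exactly the statement that $S_n^{(k)}/a_n^{(k)}$ is a martingale. (One must check integrability, which follows since $|S_n^{(k)}| \le n$.)

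Next, for the second moments, I would write a recursion for $w_n := \E(|S_n^{(k)}|^2)$. Expanding $|S_n^{(k)}|^2 = |S_{n-1}^{(k)}|^2 + 2\Re\big(\overline{S_{n-1}^{(k)}}(S_n^{(k)} - S_{n-1}^{(k)})\big) + |S_n^{(k)} - S_{n-1}^{(k)}|^2$ and taking expectations: the middle term contributes $\tfrac{2\Re(\Phi_k)}{n-1} w_{n-1}$ by the computation above; the last term is $\E(|S_{I_n} - S_{I_n-1}|^{2k})$, which is bounded (indeed each increment has modulus $1$, so $|S_n - S_{n-1}|^{2k} = 1$ and this term is exactly $1$). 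Hence
\begin{equation*}
  w_n = \left( 1 + \frac{2\Re(\Phi_k)}{n-1} \right) w_{n-1} + 1.
\end{equation*}
This is a standard linear recursion. Dividing by $b_n := \prod_{j=1}^{n-1}(1 + \tfrac{2\Re(\Phi_k)}{j})$ (note $|a_n^{(k)}|^2$ is comparable to $b_n$ up to a positive constant, since $|1 + \Phi_k/j|^2 = 1 + \tfrac{2\Re(\Phi_k)}{j} + O(1/j^2)$ and $\sum 1/j^2 < \infty$), one gets $w_n/b_n = w_1/b_1 + \sum_{m=2}^n 1/b_m$. By comparison with an integral, $b_m \sim c\, m^{2\Re(\Phi_k)}$ for a constant $c > 0$, so the partial sum $\sum_{m \le n} 1/b_m$ converges when $2\Re(\Phi_k) > 1$, grows like $\tfrac{1}{c}\log n$ when $2\Re(\Phi_k) = 1$, and grows like $\tfrac{1}{c(1 - 2\Re(\Phi_k))} n^{1 - 2\Re(\Phi_k)}$ when $2\Re(\Phi_k) < 1$. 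Dividing through by $b_n \asymp |a_n^{(k)}|^2$ then gives the three asymptotic regimes for $v_n = w_n/|a_n^{(k)}|^2$, with an explicitly computable constant $C_k$ in each case (in the subdiffusive case $C_k = \tfrac{1}{(1 - 2\Re(\Phi_k)) c \,\lim |a_n^{(k)}|^2/b_n}$, etc.).

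I expect the only genuinely delicate point to be tracking the multiplicative constant relating $|a_n^{(k)}|^2$ and $b_n$: both are infinite products whose ratio converges to a finite positive limit precisely because $\Re(\Phi_k) \le 1$ keeps the real part under control and the imaginary part only contributes an $O(1/j^2)$ correction to the log of the modulus — here is where the hypothesis $\Re(\Phi_2) < 1$ from \eqref{eqn:Nunidimensional} is used for $k = 2$, and $|\Phi_k| \le 1$ handles general $k$, ensuring $1 + \Phi_k/j$ never vanishes and the products are well-defined and nonzero. Everything else is a routine linear recursion and integral comparison.
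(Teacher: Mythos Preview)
Your proof is correct and follows essentially the same approach as the paper: the martingale property via the conditional-expectation identity $\E(S_n^{(k)}\mid\cF_{n-1})=(1+\Phi_k/(n-1))S_{n-1}^{(k)}$, and the second-moment asymptotics via the linear recursion $w_n=(1+2\Re(\Phi_k)/(n-1))w_{n-1}+1$ coming from $|\Delta S^{(k)}|=1$. The only cosmetic difference is that the paper writes the recursion directly for $u_n=\E|S_n^{(k)}/a_n^{(k)}|^2$ (handling an $O(1/n^2)$ error in the multiplicative factor) and uses the Gamma-function identity $a_n^{(k)}=\Gamma(n+\Phi_k)/(\Gamma(n)\Gamma(\Phi_k))$ to obtain $|a_n^{(k)}|^2\sim c\,n^{2\Re(\Phi_k)}$, whereas you introduce the auxiliary real product $b_n$ to make the recursion telescope exactly and then compare $b_n$ with $|a_n^{(k)}|^2$; both routes lead to the same three regimes with the same constants.
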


\begin{proof}
We write $M_n^{(k)} = S_n^{(k)}/a_h^{(k)}$. By definition, we have $|S^{(k)}_{i+1} - S_i^{(k)}| = 1$, therefore $|M_n^{(k)}| \leq n/a_n^{(k)}$ a.s. Next, by definition, writing $\mathcal{F}_n = \sigma(S_1,\ldots,S_n)$, we have
\begin{align*}
  \E\left( S^{(k)}_{n+1} | \mathcal{F}_n \right) &= \E\left( S^{(k)}_n + (S_{I_{n+1}} - S_{I_{n+1}-1})^k  \erm^{i k \theta_{n+1}} \middle| \mathcal{F}_n\right)\\
  &= S_n^{(k)} + \frac{1}{n} \sum_{i = 1}^n (S_{i+1}^{(k)} - S_i^{(k)}) \E(\erm^{i k \theta_{n+1}})\\
  &= S^{(k)}_n (1 + \frac{\Phi_k}{n}) \quad \text{a.s.}
\end{align*}
As a result, we have $\E(M^{(k)}_{n+1}|\mathcal{F}_n)=  M^{(k)}_n$ a.s., showing that $M^{(k)}$ is a martingale.

We now compute the second moment of $S^{(k)}$, observing that
\begin{align*}
  \E\left( \left| S^{(k)}_{n+1}\right|^2 \middle| \mathcal{F}_n\right)
  &= \E\left( (S_n^{(k)} + \Delta S_n^{(k)}) (\bar{S_n^{(k)} + \Delta S_n^{(k)} }) \middle| \mathcal{F}_n\right)\\
  &= \left| S_n^{(k)} \right|^2 + 2 \Re\left(S_n^{(k)} \bar{\E\left(\Delta S_n^{(k)}\middle| \mathcal{F}_n\right)} \right) + \E\left( \left| \Delta S_n^{(k)}\right|^2 \middle| \mathcal{F}_n\right),
\end{align*}
where we write $\Delta S^{(k)}_n= S^{(k)}_{n+1} - S^{(k)}_n = (S_{n+1} - S_n)^k$. Remark that $|\Delta S_n^{(k)}| = 1$ a.s. and that
\[
  \E\left(\Delta S_n^{(k)}\middle| \mathcal{F}_n\right) = \frac{\Phi_k}{n} \sum_{i=1}^n \Delta S_{i}^{(k)} = \Phi_k\frac{S_n^{(k)}}{n} \text{ a.s.,}
\]
from which we deduce that
\begin{align*}
  \E\left( \left| S^{(k)}_{n+1} \right|^2  \right) &= \E\left( \left| S_n^{(k)} \right|^2 \right) + \frac{2}{n} \Re\left( \bar{\Phi_k}\E\left( S_n^{(k)} \bar{S_n^{(k)}} \right) \right) + 1\\
  &= \E\left(  \left| S_n^{(k)} \right|^2  \right) \left(1 + \frac{2}{n}\Re(\Phi_k)\right) + 1.
\end{align*}
As a result, setting $u_n = \E\left( \left| \frac{S^{(k)}_n}{a_n^{(k)}} \right|^2 \right)$, we observe that $(u_n)$ solves the recursion equation
\begin{align*}
  u_{n+1} &= \frac{|a^{(k)}_n|^{2}}{|a^{(k)}_{n+1}|^{2}} u_n \left(1 + \frac{2}{n}\Re(\Phi_k)\right)  + \frac{1}{|a_{n+1}^{(k)}|^2}\\
  &= u_n \frac{\left( 1 + \frac{2 \Re(\Phi_k)}{n} \right)}{|1 + \frac{\Phi_k}{n}|^2}   + \frac{1}{|a_{n+1}^{(k)}|^2},
\end{align*}
with $u_1 = 1$. As $\frac{1  + \frac{2 \Re(\Phi_k)}{n}}{|1 + \frac{\Phi_k}{n}|^2} = \frac{1}{1 + \frac{|\Phi_k|^2}{n^2(1+ \frac{2\Re(\Phi_k)}{n})}}$, we observe that $u_{n+1} - u_n - \frac{1}{|a_{n+1}^{(k)}|^2} = O (u_n/n^2)$ as $n \to \infty$.

We denote by $\Gamma$ Euler's Gamma function defined on $\C \backslash \Z_-$, and observe that we can rewrite
\begin{align*}
   a^{(k)}_n &= \prod_{j=1}^{n-1} \left( 1 + \frac{\Phi_k}{j} \right) = \prod_{j=1}^{n-1}\frac{j+\Phi_k}{j} \frac{\Gamma(j+\Phi_k)}{\Gamma(j)} \frac{\Gamma(j)}{\Gamma(j+\Phi_k)}\\
  &= \prod_{j=1}^{n-1} \frac{\Gamma(j + \Phi_k+ 1)}{\Gamma(j+\Phi_k)} \frac{\Gamma(j)}{\Gamma(j + 1)} = \frac{\Gamma(n+\Phi_k)}{\Gamma(n)\Gamma(\Phi_k)},
\end{align*}
from which we deduce there exists $c_k > 0$ such that
\begin{equation}
 \label{eqn:data}
 a_n^{(k)} \sim C_k n^{\Phi_k}, \text{ and } |a_n^{(k)}|^2 \sim |C_k|^2 n^{2 \Re(\Phi_k)} \text{ as $n \to \infty$.}
\end{equation}
Using that $\Re(\Phi_k) \in (-1,1)$ due to \eqref{eqn:Nunidimensional} we conclude that
\[
  \sum_{j=1}^n \frac{1}{|a_j^{(k)}|^2} \begin{cases}
    = O(1) & \text{ if } \Re(\Phi_k)>1/2\\
    \sim |C_k|^2 \log n & \text{ if } \Re(\Phi_k) = 1/2\\
    \sim \frac{|C_k|^2}{1 - 2 \Re(\Phi_k)} n^{1 - 2 \Re(\Phi_k)} &\text{ if } \Re(\Phi_k) < 1/2
  \end{cases}
  \quad \text{ as $n \to \infty$.}
\]
Solving the recursion for $u$ in all three cases, we conclude the proof.
\end{proof}

\subsection{Convergence rates of quadratic variations}
\label{subsec:quadVar}

In the rest of the section, we pay specific interest to the martingale $M$, defined by $M_n = S_n/a_n$. We study here the quadratic variations of $(M_n)$, defined as follows. For $k \geq 1$, we write $\Delta M_k = M_{k+1} - M_k$ and
\begin{equation}
  \label{eqn:martDefs}
  \begin{split}
    \crochet{M}_n &= \sum_{k=1}^{n-1} \E\left( (\Delta M_k)^2 \middle| \mathcal{F}_{k} \right),\\
    \crochet{M,\overline{M}}_n &= \sum_{k=1}^{n-1} \E\left( \Delta M_k \overline{\Delta M_k} \middle| \mathcal{F}_{k} \right) = \sum_{k=1}^{n-1} \E\left( |\Delta M_k|^2 \middle| \mathcal{F}_{k} \right).
  \end{split}
\end{equation}
We now compute these two quantities and estimate their asymptotic behaviour when $\Re(\Phi_1) < 1/2$.

\begin{lemma}
\label{lem:quad-complex}
Under \eqref{eqn:cdSub}, we have
\begin{equation}
  \crochet{M,\overline{M}}_n \sim \frac{1}{1 - 2 \Re(\Phi_1)} n^{1 - 2 \Re(\Phi_1)} \quad \text{ in probability as $n \to \infty$.}
\end{equation}
\end{lemma}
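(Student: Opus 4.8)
The plan is to compute $\E(|\Delta M_k|^2 \mid \mathcal F_k)$ explicitly, sum over $k$, and identify the leading term, then upgrade the resulting $L^1$ (mean) asymptotics to convergence in probability. First I would expand $\Delta M_k = M_{k+1} - M_k = S_{k+1}/a_{k+1} - S_k/a_k$. Since $S_{k+1} = S_k + \Delta S_k$ with $\Delta S_k = (S_{I_{k+1}} - S_{I_{k+1}-1})\erm^{i\theta_{k+1}}$, and $a_{k+1} = a_k(1 + \Phi_1/k)$, one gets $\Delta M_k = \frac{1}{a_{k+1}}\left( \Delta S_k - \frac{\Phi_1}{k} S_k \right)$, which is exactly the martingale increment (its conditional expectation vanishes, by the computation $\E(\Delta S_k \mid \mathcal F_k) = \Phi_1 S_k/k$ already recorded in the proof of Lemma~\ref{lem:martingale}, noting $S_k^{(1)} = S_k$). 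Then $|\Delta M_k|^2 = \frac{1}{|a_{k+1}|^2}\left( |\Delta S_k|^2 - 2\Re(\tfrac{\bar\Phi_1}{k}\bar S_k \Delta S_k) + \tfrac{|\Phi_1|^2}{k^2}|S_k|^2 \right)$, and since $|\Delta S_k| = 1$ a.s., taking $\E(\cdot \mid \mathcal F_k)$ and using again $\E(\Delta S_k \mid \mathcal F_k) = \Phi_1 S_k/k$ gives
\[
  \E\left( |\Delta M_k|^2 \,\middle|\, \mathcal F_k \right) = \frac{1}{|a_{k+1}|^2}\left( 1 - \frac{2\Re(\Phi_1)}{k}|S_k|^2 \cdot \frac{1}{k}\cdot k + \cdots \right),
\]
more precisely $\frac{1}{|a_{k+1}|^2}\bigl(1 - \frac{|\Phi_1|^2}{k^2}|S_k|^2\bigr)$ after the cross term $-\frac{2|\Phi_1|^2}{k^2}|S_k|^2$ combines with $+\frac{|\Phi_1|^2}{k^2}|S_k|^2$; I would carry out this bookkeeping carefully. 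The upshot is $\crochet{M,\overline M}_n = \sum_{k=1}^{n-1}\frac{1}{|a_{k+1}|^2} - \sum_{k=1}^{n-1}\frac{|\Phi_1|^2 |S_k|^2}{k^2 |a_{k+1}|^2}$.

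For the first sum, the estimate $\sum_{j=1}^n |a_j|^{-2} \sim \frac{|C_1|^2}{1-2\Re(\Phi_1)} n^{1-2\Re(\Phi_1)}$ from the proof of Lemma~\ref{lem:martingale} together with $|a_n|^2 \sim |C_1|^2 n^{2\Re(\Phi_1)}$ gives, after dividing by $n^{1-2\Re(\Phi_1)}$, the constant $\frac{1}{1-2\Re(\Phi_1)}$ (the $|C_1|^2$ cancels once I normalize correctly — in fact I'd absorb $C_1$ so that $M_n = S_n/a_n$ has $\E|M_n|^2 = u_n \sim \frac{1}{1-2\Re(\Phi_1)}n^{1-2\Re(\Phi_1)}$ directly from Lemma~\ref{lem:martingale}). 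For the second (subtracted) sum, I would show it is negligible compared to $n^{1-2\Re(\Phi_1)}$: taking expectations, $\E|S_k|^2 = |a_k|^2 u_k \sim \frac{|C_1|^2}{1-2\Re(\Phi_1)} k$, so $\E\bigl(\frac{|S_k|^2}{k^2|a_{k+1}|^2}\bigr) = O(k^{-1-2\Re(\Phi_1)})$, and $\sum_{k=1}^{n-1} k^{-1-2\Re(\Phi_1)}$ is $O(1)$ when $\Re(\Phi_1)>0$ and $O(n^{-2\Re(\Phi_1)})$ (equivalently $O(\log n)$ if $\Re(\Phi_1)=0$, or $O(n^{-2\Re(\Phi_1)})$ if $\Re(\Phi_1)<0$) — in every case $o(n^{1-2\Re(\Phi_1)})$ since $\Re(\Phi_1) < 1/2$ forces $1-2\Re(\Phi_1)>0$. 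Hence $\E\bigl(\crochet{M,\overline M}_n\bigr) \sim \frac{1}{1-2\Re(\Phi_1)}n^{1-2\Re(\Phi_1)}$.

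To pass from convergence of the mean to convergence in probability, I would control the variance of $\crochet{M,\overline M}_n$, or more simply note that $\crochet{M,\overline M}_n = \sum_{k=1}^{n-1}\frac{1}{|a_{k+1}|^2} - T_n$ where the first term is deterministic and equivalent to $\frac{1}{1-2\Re(\Phi_1)}n^{1-2\Re(\Phi_1)}$, while $T_n = \sum_k \frac{|\Phi_1|^2|S_k|^2}{k^2|a_{k+1}|^2} \geq 0$ has $\E(T_n) = O(n^{(1-2\Re(\Phi_1))^-})$ more precisely $o(n^{1-2\Re(\Phi_1)})$; since $T_n$ is a sum of nonnegative terms, $T_n/n^{1-2\Re(\Phi_1)} \to 0$ in $L^1$ hence in probability by Markov's inequality, which immediately yields the claim. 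The main obstacle is the honest bookkeeping of the cross term in $\E(|\Delta M_k|^2\mid\mathcal F_k)$ — making sure the $\frac{1}{k^2}|S_k|^2$ contributions combine with the right sign and constant so that the subtracted term is genuinely of lower order — together with double-checking the edge cases $\Re(\Phi_1) \le 0$ in the summability of $\sum k^{-1-2\Re(\Phi_1)}$; everything else is a direct application of the growth estimates already proved in Lemma~\ref{lem:martingale}.
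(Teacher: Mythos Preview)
Your approach is correct and essentially identical to the paper's: compute $\Delta M_k = \frac{1}{a_{k+1}}\bigl(\Delta S_k - \frac{\Phi_1}{k}S_k\bigr)$, obtain $\E(|\Delta M_k|^2\mid\mathcal F_k) = \frac{1}{|a_{k+1}|^2}\bigl(1 - |\Phi_1 S_k/k|^2\bigr)$, split $\crochet{M,\bar M}_n$ into the deterministic leading sum $\sum_k |a_{k+1}|^{-2}$ and a nonnegative remainder $T_n$ with $\E(T_n)=o(n^{1-2\Re(\Phi_1)})$, then conclude via Markov's inequality. You are in fact slightly more careful than the paper in distinguishing the cases $\Re(\Phi_1)\gtrless 0$ when bounding $\sum_k k^{-1-2\Re(\Phi_1)}$; the paper writes the bound as $Cn^{-2\Re(\Phi_1)}$ uniformly, which is literally only correct when $\Re(\Phi_1)\le 0$, though the conclusion $o(v_n)$ holds regardless.
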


\begin{proof}
Let $n \geq 1$, we observe that
\begin{align*}
  \Delta M_k &= M_{k+1} - M_k = \frac{S_n + \Delta S_n}{a_{n+1}} - \frac{S_n}{a_n} = \frac{1}{a_{n+1}}\left( S_n + \Delta S_n - \left(1+  \frac{\Phi_1}{n}\right) S_n \right)\\
  &= \frac{1}{a_{n+1}} \left( \Delta S_n - \frac{\Phi_1}{n}S_n \right).
\end{align*}
Therefore,
\begin{align*}
  \E\left( |\Delta M_k|^2 \middle| \mathcal{F}_k \right)
  &= \frac{1}{|a_{k+1}|^2}\E\left( |\Delta S_k|^2 + \left| \frac{\Phi_1}{k} S_k \right|^2 - 2 \Re\left( \frac{\Phi_1}{k} S_k \overline{\Delta S_k}  \right) \middle| \mathcal{F}_k \right)\\
  &= \frac{1}{|a_{k+1}|^2} \left( 1 + \left| \frac{\Phi_1}{k} S_k \right|^2 - 2 \Re\left( \left|\frac{\Phi_1}{k} S_k\right|^2  \right)\right) \text{ a.s.},
\end{align*}
using that $|\Delta S_k| = 1$ and $\E(\Delta S_{k}| \mathcal{F}_k) = \Phi_1\frac{S_k}{k}$ a.s. This yields
\begin{equation*}
  \E\left( |\Delta M_k|^2 \middle| \mathcal{F}_k \right) = \frac{1}{|a_{k+1}|^2}\left( 1 - \left| \frac{\Phi_1}{k} S_k \right|^2\right) \text{ a.s.}
\end{equation*}
As a consequence, we have
\begin{equation}
  \label{eqn:formula}
  \crochet{M,\bar{M}}_n = \sum_{k=1}^{n-1} \frac{1}{|a_{k+1}|^2} -  \sum_{k=1}^{n-1} \frac{1}{|a_{k+1}|^2} \left|\Phi_1\frac{S_k}{k}\right|^2 \quad \text{a.s.}
\end{equation}
We now study the asymptotic behaviour of these two sums.
We deduce from \eqref{eqn:data} that
\begin{equation}
  \label{eqn:asymp}
  v_n := \sum_{k=1}^{n-1} \frac{1}{|a_{k+1}|^2} \sim |C_1|^2 \frac{n^{1 - 2 \Re(\Phi_{1})}}{1 - 2 \Re(\Phi_1)},
\end{equation}
and from Lemma~\ref{lem:martingale} that
\[
  \frac{1}{|a_{n+1}|^2 n^2}\E\left( |S_n|^2\right) \sim_{n \to \infty} |C_1|^2 n^{-(1+ 2\Re(\Phi_1))},
\]
as $n \to \infty$. Therefore, there exists $C > 0$ such that for all $n$ large enough, we have
\begin{equation}
  \label{eqn:intermediate}
  \E\left(  \sum_{k=1}^{n-1} \frac{1}{|a_{k+1}|^2} \left|\Phi_1\frac{S_k}{k}\right|^2\right) \leq C n^{-2\Re(\Phi_1)} = o(v_n).
\end{equation}
It shows that $\lim_{n \to \infty} \frac{1}{v_n}\sum_{k=1}^{n-1} \frac{1}{|a_{k+1}|^2} \left|\Phi_1\frac{S_k}{k}\right|^2 = 0$ in probability, which completes the proof.
\end{proof}

We now turn to the complex quadratic variation of the complex martingale.
\begin{lemma}
\label{lem:quad-mod}
We have $\lim_{n \to \infty} \frac{1}{n^{1- 2\Re(\Phi_1)}} \crochet{M}_n = 0$ in probability.
\end{lemma}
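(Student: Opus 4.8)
The plan is to compute $\E\big((\Delta M_k)^2 \mid \mathcal{F}_k\big)$ in closed form, split $\crochet{M}_n$ into two sums accordingly, and show that each is $o(n^{1-2\Re(\Phi_1)})$ in $L^1$, which is stronger than the claimed convergence in probability. Note that the crude bound $|\crochet{M}_n|\le\crochet{M,\overline{M}}_n$ is useless here since the right-hand side is of order exactly $n^{1-2\Re(\Phi_1)}$ by Lemma~\ref{lem:quad-complex}; the point is that $\E\big((\Delta M_k)^2\mid\mathcal{F}_k\big)$ enjoys an extra cancellation. Using $\Delta M_k = a_{k+1}^{-1}\big(\Delta S_k - \tfrac{\Phi_1}{k}S_k\big)$ as in Lemma~\ref{lem:quad-complex}, I would expand $(\Delta M_k)^2 = a_{k+1}^{-2}\big((\Delta S_k)^2 - 2\tfrac{\Phi_1}{k}S_k\,\Delta S_k + \tfrac{\Phi_1^2}{k^2}S_k^2\big)$. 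The one new computation is $\E\big((\Delta S_k)^2\mid\mathcal{F}_k\big)$: since $\Delta S_k = (S_{I_{k+1}}-S_{I_{k+1}-1})\erm^{i\theta_{k+1}}$, squaring produces the factor $\erm^{2i\theta_{k+1}}$, so averaging over the uniform choice of $I_{k+1}$ and the independent $\theta_{k+1}$ gives $\E\big((\Delta S_k)^2\mid\mathcal{F}_k\big) = \Phi_2\tfrac1k\sum_{i=1}^{k}(S_i-S_{i-1})^2 = \Phi_2\tfrac{S_k^{(2)}}{k}$, where $S^{(2)}$ is exactly the auxiliary process of Lemma~\ref{lem:martingale} for $k=2$. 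Combined with $\E(\Delta S_k\mid\mathcal{F}_k)=\Phi_1 S_k/k$, this yields $\E\big((\Delta M_k)^2\mid\mathcal{F}_k\big)= a_{k+1}^{-2}\big(\Phi_2\tfrac{S_k^{(2)}}{k} - \tfrac{\Phi_1^2}{k^2}S_k^2\big)$, so that
\[
  \crochet{M}_n = \Phi_2\sum_{k=1}^{n-1}\frac{S_k^{(2)}}{k\,a_{k+1}^2} \;-\; \Phi_1^2\sum_{k=1}^{n-1}\frac{S_k^2}{k^2a_{k+1}^2} =: A_n - B_n \quad\text{a.s.}
\]

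The term $B_n$ needs no new estimate: $|B_n| \le \sum_{k=1}^{n-1}|a_{k+1}|^{-2}\big|\Phi_1 S_k/k\big|^2$ is precisely the quantity whose expectation was shown to be $o(v_n)=o(n^{1-2\Re(\Phi_1)})$ in \eqref{eqn:intermediate}, so $\E|B_n| = o(n^{1-2\Re(\Phi_1)})$. For $A_n$, Cauchy--Schwarz gives $\E|A_n| \le |\Phi_2|\sum_{k=1}^{n-1}(k|a_{k+1}|^2)^{-1}\big(\E|S_k^{(2)}|^2\big)^{1/2}$. The recursion $\E(|S_{n+1}^{(2)}|^2) = \E(|S_n^{(2)}|^2)\big(1+\tfrac{2\Re(\Phi_2)}{n}\big)+1$ obtained in the proof of Lemma~\ref{lem:martingale} yields $\E(|S_k^{(2)}|^2) = O\big(k^{\max(1,\,2\Re(\Phi_2))}\log k\big)$, while \eqref{eqn:data} gives $|a_{k+1}|^2 \sim |C_1|^2 k^{2\Re(\Phi_1)}$. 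Hence the $k$-th summand is $O\big(k^{\max(1/2,\,\Re(\Phi_2))-1-2\Re(\Phi_1)}\sqrt{\log k}\big)$, and summing over $k\le n$ produces at most $O\big(n^{\max(1/2,\,\Re(\Phi_2))-2\Re(\Phi_1)}\sqrt{\log n}\big)$ (and $O((\log n)^{3/2})$ or $O(1)$ when the exponent vanishes or is negative). Since $\max(1/2,\Re(\Phi_2)) < 1$ by \eqref{eqn:Nunidimensional} and $1-2\Re(\Phi_1) > 0$ by \eqref{eqn:cdSub}, this is $o(n^{1-2\Re(\Phi_1)})$ in every case, so $\E|A_n| = o(n^{1-2\Re(\Phi_1)})$. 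Adding the two bounds gives $\E|\crochet{M}_n| = o(n^{1-2\Re(\Phi_1)})$, and $L^1$ convergence implies convergence in probability.

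The step to get right is the conditional second moment: the square of a step brings in $\Phi_2$ rather than $\Phi_1$ and couples $A_n$ to the auxiliary process $S^{(2)}$, which is exactly why Lemma~\ref{lem:martingale} was proved for general $k$; everything after that is power counting parallel to Lemma~\ref{lem:quad-complex}. The only mild subtlety is that \eqref{eqn:Nunidimensional} permits $\Phi_2=-1$, in which case $a_n^{(2)}=0$ and Lemma~\ref{lem:martingale} does not literally apply; but the recursion for $\E(|S_n^{(2)}|^2)$ remains valid and gives $\E(|S_n^{(2)}|^2)\sim n/3$, so the bound on $A_n$ holds a fortiori.
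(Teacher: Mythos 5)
Your proof is correct and follows essentially the same route as the paper: the same conditional computation giving $\E\big((\Delta M_k)^2\mid\mathcal{F}_k\big)=a_{k+1}^{-2}\big(\Phi_2 S_k^{(2)}/k-\Phi_1^2S_k^2/k^2\big)$, the same use of \eqref{eqn:intermediate} for the $S_k^2$ sum, and the same moment bound (via $\E|X|\le(\E|X|^2)^{1/2}$ and Markov) for the $S^{(2)}$ sum. Your handling of the last step—estimating $\E\big(|S_k^{(2)}|^2\big)$ directly from the recursion rather than through $a^{(2)}$ and Lemma~\ref{lem:martingale}—is in fact slightly more careful than the paper's stated bound $O\big(n^{\Re(\Phi_2)-2\Re(\Phi_1)}\big)$, since it covers $\Re(\Phi_2)\le 1/2$ (including the degenerate case $\Phi_2=-1$, where $a_n^{(2)}=0$) with the correct exponent $\max\big(1/2,\Re(\Phi_2)\big)$.
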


\begin{proof}
The proof follows a similar path to the previous one. We observe that
\begin{align*}
   \E\left( (\Delta M_k)^2 \middle| \mathcal{F}_k \right)
   &= \frac{1}{a_{k+1}^2}\E\left( (\Delta S_k)^2 + \left( \frac{\Phi_1}{k} S_k \right)^2 - 2 \left( \frac{\Phi_1}{k} S_k \Delta S_k  \right) \middle| \mathcal{F}_k \right)\\
   &= \frac{1}{a_{k+1}^2}  \left( \frac{\Phi_2}{k} \sum_{i=0}^{k-1} (\Delta S_i)^2+  \left( \frac{\Phi_1}{k} S_k \right)^2 - 2 \left( \frac{\Phi_1}{k} S_k\right)^2 \right)\\
   &= \frac{1}{a_{k+1}^2} \left( \frac{\Phi_2}{k} S_k^{(2)} - \left( \frac{\Phi_1}{k} S_k \right)^2 \right) \text{ a.s.},
\end{align*}
by definition. As a result, we have
\begin{align*}
  \crochet{M}_n& = \sum_{k=1}^{n-1} \frac{\Phi_2}{a_{k+1}^2}\frac{S_k^{(2)}}{k}  - \sum_{k=1}^{n-1} \frac{\Phi_1^2}{a_{k+1}^2} \left( \frac{S_k}{k} \right)^2 \text{a.s.}
\end{align*}
We now show that these two terms are $o(n^{1 - 2 \Re(\Phi_1)})$ in probability as $n \to \infty$.

Observe that by triangular inequality, we have
\[
  \E\left( \left| \sum_{k=1}^{n-1} \frac{\Phi_1^2}{a_{k+1}^2} \left( \frac{S_k}{k} \right)^2\right| \right)
  \leq \sum_{k=1}^{n-1} \E\left( \frac{1}{k^2|a_{k+1}|^2} \left| S_k \right|^2 \right) = o(n^{1 - 2 \Re(\Phi_1)})
\]
by \eqref{eqn:intermediate}. Therefore, we have
\[
  \lim_{n \to \infty} \frac{1}{n^{1 - 2 \Re(\Phi_1)}} \sum_{k=1}^{n-1} \frac{\Phi_1^2}{a_{k+1}^2} \left( \frac{S_k}{k} \right)^2 = 0 \text{ in probability,}
\]
by Markov inequality.

We then use the triangular and Jensen's inequalities to first bound
\begin{align*}
  \E\left( \left| \sum_{k=1}^{n-1} \frac{\Phi_2}{a_{k+1}^2}\frac{S_k^{(2)}}{k}\right| \right)
  &\leq \sum_{k=1}^{n-1} \frac{|\Phi_2|}{k|a_{k+1}|^2} \E\left( \left| S_k^{(2)} \right| \right)\\
  &\leq \sum_{k=1}^{n-1} \frac{1}{k|a_{k+1}|^2} \E\left(\left|S_k^{(2)}\right|^{2} \right)^{1/2}\\
  &\leq \sum_{k=1}^{n-1} \frac{|a^{(2)}_{k+1}|}{k|a_{k+1}|^2} \E\left(\left|\frac{S_k^{(2)}}{a^{(2)}_{k+1}}\right|^{2} \right)^{1/2} = O( n ^{\Re (\Phi_2) - 2 \Re (\Phi_1)}).
\end{align*}
By Lemma~\ref{lem:martingale} and the previous computations, we have

\[
  \lim_{n \to \infty} \frac{1}{n^{1 - 2 \Re(\Phi_1)}} \sum_{k=1}^{n-1} \frac{\Phi_2}{a_{k+1}^2}\left(\frac{S_k^{(2)}}{k}\right)^2 = 0 \text{ in probability,}
\]
by Markov inequality, and the lemma is proved.
\end{proof}

\subsection{Lindeberg's condition}
\label{subsec:lindeberg}
We now check the final condition for applying the central limit theorem is verified.
\begin{lemma}
\label{lem:lindeberg}
 We have, for all $\varepsilon>0$,
  \[
    \lim_{n \to \infty} \frac{1}{v_n} \sum_{k=1}^{n-1} \E\left( |\Delta M_k|^2 \mathbf{1}_{|\Delta M_k|\geq \varepsilon \sqrt{v_n}} \middle| \mathcal{F}_{k} \right)  = 0 \text{ in probability,}
  \]
  in other words, the Lindeberg's condition is satisfied.
\end{lemma}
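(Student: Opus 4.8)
The plan is to verify Lindeberg's condition by controlling the increments $\Delta M_k$ uniformly in $k \le n$. The starting point is the explicit formula
\[
  \Delta M_k = \frac{1}{a_{k+1}}\left( \Delta S_k - \frac{\Phi_1}{k} S_k \right),
\]
obtained in the proof of Lemma~\ref{lem:quad-complex}. Since $|\Delta S_k| = 1$ a.s. and $|S_k| \le k$ a.s., this gives the deterministic bound $|\Delta M_k| \le \frac{1 + |\Phi_1|}{|a_{k+1}|}$ a.s. By \eqref{eqn:data} we have $|a_{k+1}| \sim |C_1| k^{\Re(\Phi_1)}$, so there is a constant $c>0$ with $|\Delta M_k| \le c\, k^{-\Re(\Phi_1)}$ for all $k \ge 1$. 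Because $\Re(\Phi_1) > -1$ by \eqref{eqn:Nunidimensional}, the worst case over $k \in \{1,\ldots,n-1\}$ is at most $c' n^{\max(0, -\Re(\Phi_1))} \le c' n^{1/2}$ — but in fact we want a bound that is $o(\sqrt{v_n})$, i.e. $o(n^{(1-2\Re(\Phi_1))/2})$.

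The key numerical check is therefore: $\max_{1 \le k \le n-1} |\Delta M_k| = o(\sqrt{v_n})$ as $n \to \infty$. If $\Re(\Phi_1) \ge 0$, then $|\Delta M_k| \le c$ is bounded while $\sqrt{v_n} \sim (|C_1|^2/(1-2\Re(\Phi_1)))^{1/2} n^{(1-2\Re(\Phi_1))/2} \to \infty$ since $1 - 2\Re(\Phi_1) > 0$ in this regime, so the ratio tends to $0$. If $\Re(\Phi_1) < 0$, then $|\Delta M_k| \le c\, k^{-\Re(\Phi_1)} \le c\, n^{-\Re(\Phi_1)}$, and we need $n^{-\Re(\Phi_1)} = o(n^{(1-2\Re(\Phi_1))/2})$, i.e. $-\Re(\Phi_1) < \tfrac12 - \Re(\Phi_1)$, which holds trivially. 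Hence in all cases covered by \eqref{eqn:cdSub} there is a sequence $\delta_n \to 0$ with $\max_{1 \le k \le n-1} |\Delta M_k| \le \delta_n \sqrt{v_n}$ a.s.

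With this in hand the conclusion is immediate: for $n$ large enough that $\delta_n < \varepsilon$, the indicator $\mathbf{1}_{|\Delta M_k| \ge \varepsilon \sqrt{v_n}}$ vanishes identically (a.s.) for every $k \le n-1$, so the entire sum in the statement is a.s. equal to $0$ for all large $n$, which certainly converges to $0$ in probability. I would write this out, citing Lemma~\ref{lem:quad-complex} for the asymptotics of $v_n$ and \eqref{eqn:data} for those of $|a_{k+1}|$, and noting that $\Re(\Phi_1) \in (-1, 1/2)$ throughout this section.

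There is essentially no obstacle here: the only thing to be careful about is making the case distinction on the sign of $\Re(\Phi_1)$ explicit so that the exponent comparison $-\Re(\Phi_1)_+ < (1 - 2\Re(\Phi_1))/2$ is manifestly correct on the whole range $\Re(\Phi_1) \in (-1, 1/2)$, and to make sure the bound $|S_k| \le k$ a.s. is stated (it follows since $S_k$ is a sum of $k$ unit steps). One could alternatively avoid the case split by simply noting $|\Delta M_k| \le c (1 + k^{-\Re(\Phi_1)}) \le 2c\max(1, n^{-\Re(\Phi_1)})$ and comparing exponents once; either way the argument is a two-line exponent comparison rather than a genuine Lindeberg truncation estimate, since the increments are bounded deterministically by a quantity of smaller order than the normalisation.
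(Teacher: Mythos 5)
Your proof is correct, but it takes a genuinely different (and more elementary) route than the paper. The paper performs the standard Lyapunov-type reduction: it bounds the indicator by $|\Delta M_k|^2/(\eps^2 v_n)$, so the Lindeberg sum is controlled by $\frac{1}{\eps^2 v_n}\sum_{k=1}^{n-1}\E\bigl(|\Delta M_k|^4\mid\mathcal{F}_k\bigr)$, then uses the same deterministic bound you use, $|\Delta M_k|\le (1+|\Phi_1|)/|a_{k+1}|$, to get $\E(|\Delta M_k|^4\mid\mathcal{F}_k)\le (1+|\Phi_1|)^4/|a_{k+1}|^4$, and concludes after dividing once more by $v_n$. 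You instead observe that the increments are deterministically bounded and that $\max_{k\le n-1}|\Delta M_k| = O\bigl(n^{\max(0,-\Re(\Phi_1))}\bigr) = o(\sqrt{v_n})$, since $\max(0,-\Re(\Phi_1)) < (1-2\Re(\Phi_1))/2$ throughout $\Re(\Phi_1)\in(-1,1/2)$; hence for each fixed $\eps>0$ every indicator vanishes a.s. for $n$ large and the truncated sum is eventually identically zero. Your version is sharper and avoids any moment computation, at the price of relying crucially on the a.s. boundedness of the steps; the paper's fourth-moment route is the one that would survive if the increments were unbounded with controlled fourth moments. Two small points to clean up: (i) the parenthetical claim $c' n^{\max(0,-\Re(\Phi_1))}\le c' n^{1/2}$ is false when $\Re(\Phi_1)<-1/2$ (the exponent can be as large as any value below $1$), but this is harmless since your actual exponent comparison $-\Re(\Phi_1)<\tfrac12-\Re(\Phi_1)$ is the correct one on the whole range; (ii) to turn the asymptotics \eqref{eqn:data} into a bound valid for every $k\ge 1$ you should note that $a_{k+1}\neq 0$ for all $k$, which holds because \eqref{eqn:Nunidimensional} rules out $\Phi_1=-1$.
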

\begin{proof}
First, we have the following inequality
  \begin{align*}
    \sum_{k=1}^{n-1} \E\left( |\Delta M_k|^2 \mathbf{1}_{|\Delta M_k|\geq \varepsilon \sqrt{v_n}} \middle| \mathcal{F}_{k} \right)
     & \leq \frac{1}{\varepsilon^2 v_n}\sum_{k=1}^{n-1} \E\left( |\Delta M_k|^4 \middle| \mathcal{F}_{k} \right).
  \end{align*}
Then, remark that
\begin{align*}
\E\left( |\Delta M_k|^4 \middle| \mathcal{F}_{k} \right) & = \frac{1}{|a_{k+1}|^4} \E\left( \left|\Delta S_k - \frac{\Phi_1}{k}S_k\right|^4 \middle| \mathcal{F}_{k} \right) \\
&  \leq \frac{1}{|a_{k+1}|^4} \E\left( \left(|\Delta S_k| + \left|\frac{\Phi_1}{k}S_k\right|\right)^4 \middle| \mathcal{F}_{k} \right)  \leq \frac{(1+ |\Phi_1|)^4}{|a_{k+1}|^4}.
\end{align*}
Using the previous computations and, again, Lemma~\ref{lem:martingale}, we find
\begin{align*}
    \sum_{k=1}^{n-1} \E\left( |\Delta M_k|^2 \mathbf{1}_{|\Delta M_k|\geq \varepsilon \sqrt{v_n}} \middle| \mathcal{F}_{k} \right)
     & \leq \frac{(1+ |\Phi_1|)^4}{\varepsilon^2 v_n}\sum_{k=1}^{n-1} \frac{1}{|a_{k+1}|^4} = O (n^{-2\Re(\Phi_1)}),
  \end{align*}
and finally
\[
  \lim_{n \to \infty} \frac{1}{v_n} \sum_{k=1}^{n-1} \E\left( |\Delta M_k|^2 \mathbf{1}_{|\Delta M_k|\geq \varepsilon \sqrt{v_n}} \middle| \mathcal{F}_{k} \right)  = 0 \text{ in probability,}
\]
  again by Markov inequality, and the lemma is proved.
\end{proof}

\begin{proof}{Proof of \eqref{eqn:mainSub}}
From Lemmas \ref{lem:quad-complex}, \ref{lem:quad-mod}, and \ref{lem:lindeberg}, and the identification $\IC \simeq \IR^2$, we prove \eqref{eqn:mainSub} by using Lindeberg’s Central Limit Theorem for vector martingales (see, e.g., \cite[Corollary 2.1.10]{Duflo1997}).
\end{proof}

The case $\Re(\Phi_1) = \frac{1}{2}$ is treated similarly, adapting the proof of the previous lemmas with the logarithmic rate. Indeed, since in this case $|a_n| \sim |C_1| n^{1/2}$, the equivalent of $(2.7)$ is $v_n \sim |C_1|^2 \log n$ such that $(|a_{n+1}| n)^{-2}\E\left( |S_n|^2\right) \sim |C_1|^2 \log n$. We refer to \cite{BercuLaulin2019} for a (slightly) more detailed example.

\section{Spiraling Elephant random walk}
\label{sec:spiraling}

We now turn to the study of the asymptotic properties of the ERW assuming that $\Re(\Phi_1)> \frac{1}{2}$. We use in this section a Poissonnization method adapted from the work of Janson \cite{Jan} for the study of urns. Specifically, we consider the ERW subordinated to an independent Yule process $(N_t, t \geq 1)$. We then use that $(S_{N_t}, t \geq 0)$ can be seen as an additive martingale of a branching Markov process.

In Section~\ref{subsec:cpxBMP}, we introduce the branching Markov process we will consider, and show its connection to the ERW in the plane. We then prove, in Section~\ref{subsec:martcv}, the almost sure convergence of $\erm^{-t \Phi_1} S_{N_t}$ as $t \to \infty$ by controlling the moments of this martingale. Finally, we study in Section~\ref{subsec:clt} the rate of convergence of this martingale by classical Fourier transform methods. We complete the section with a proof of \eqref{eqn:mainSup}.

\subsection{Associated complex-valued Markov branching process}
\label{subsec:cpxBMP}

Let us now describe the underlying branching process, constructed as a $\C$-valued Markov branching particle system. In this process, particles do not move or die out, but give birth at unit rate to new children, such that the position of each child of an individual at position $x$ is given by an independent copy of the random variable $x \erm^{i \theta}$. Let us precise the notation we use to construct this process.

The set of particles in this branching process is encoded using the classical Ulam-Harris-Neveu notation by
\[
  \mathcal{U} = \bigcup_{n \geq 0} \N^n \quad \text{ with the convention $\N^0 = \{\emptyset\}$}.
\]
The label $\emptyset$ corresponds to the initial particle of the branching process, alive at time $0$, its successive children are labelled $1,2,3$, etc. More generally, $u = (u(1),\ldots,u(k)) \in \N^k$ corresponds to the $u(k)$th child of the $u(k-1)$th child of ... of the $u(1)$th child of the initial particle. We write $|u|=k$ for the generation of $u$ (with convention $|\emptyset|=0$) and for all $j \leq k$, we write $u_j = (u(1),\ldots, u(j))$ the ancestor at the $j$th generation of $u$.

Let $(\Xi_u, u \in \mathcal{U})$ be i.i.d. Poisson point processes with unit intensity on $\R_+$, such that the atoms of $\Xi_u$ correspond to the ages at which particle $u$ gives birth to its children. More precisely, writing $(\xi_u(k), k \geq 1)$ for the sequence of atoms of $\Xi_u$ ranked in increasing order, we define for $u \in \mathcal{U}$
\begin{equation}
  T_u = \sum_{j=1}^{|u|} \xi_{u_{j-1}}(u(j)) \quad \text{ with the convention } T_\emptyset = \emptyset,
\end{equation}
corresponding to the birth-time of particle $u$ in the process. In other words, the birth time of the $j$th child of $u$ is given by $T_u + \xi_u(j)$ the sum of the birth-time of $u$ and its age at the creation of this child. The set of particles born before time $t$ is written $\mathcal{N}_t = \{u \in \mathcal{B} : T_u \leq t\}$ --which corresponds to the population alive at time $t$ since particles don't die.

Independently of the previous family, we introduce $(\theta_u, u \in \mathcal{U} \setminus\{\emptyset\})$ i.i.d. random variables with value in $[0,2\pi)$, that parametrize the displacement of $u$ with respect to its parent. The position of $u \in \mathcal{B}$, written $X_u$, is then defined by
\begin{equation}
  \label{eqn:position}
  X_u = \prod_{j=1}^{|u|} \erm^{i\theta_{u_j}} \quad \text{with the convention } X_\emptyset = 1.
\end{equation}
Let us mention that the branching Markov process $((X_u, u \in \mathcal{N}_t, t) \geq 0)$ takes values on the unit sphere $\{z \in \C : |z| =1\}$, and can be constructed as $((\erm^{i Y_u}, u \in \mathcal{N}_t), t \geq 0)$ where $Y$ is a continuous-time branching random walk on $\R$.

We introduce, for $k \in \N$, the quantity
\begin{equation}
  \label{eqn:defMartK}
  Z_k(t) = \sum_{u \in \mathcal{N}_t} X_u^k,
\end{equation}
which can be thought as the Fourier coefficients of the empirical measure $\sum_{u \in \mathcal{N}_t} \delta_{Y_t(u)}$. This additive functional of $X$ is connected to the ERW in the following lemma.
\begin{lemma}
\label{lem:obvious}
For all $n \geq 1$, we write
\[
  \tau_n = \inf\{t \geq 0 : \#\mathcal{N}_t = n\}.
\]
Let $(S_n, n \geq 1)$ be the ERW defined in the introduction, we have
\[
  (S_n, n \geq 1) \egaldistr \left( Z_{1}(\tau_n), n \geq 1 \right).
\]
\end{lemma}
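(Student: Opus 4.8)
The plan is to establish the distributional identity by a coupling argument, showing that the discrete-time process $(Z_1(\tau_n), n \geq 1)$ obtained by observing the branching Markov process at its successive jump times satisfies exactly the recursion~\eqref{eqn:defERWC} defining the ERW. First I would recall the basic structure of the Yule process underlying $(\#\mathcal{N}_t, t \geq 0)$: since every particle gives birth at unit rate and particles never die, $\#\mathcal{N}_t$ is a pure-birth (Yule) process, so the times $(\tau_n, n \geq 1)$ are a.s. distinct, $\#\mathcal{N}_{\tau_n} = n$, and at time $\tau_n$ exactly one new particle is added to the population. Crucially, by the memorylessness of the exponential clocks and the fact that all $n$ particles alive at time $\tau_n^-$ carry i.i.d.\ (unit-rate) birth processes, the parent of the $(n{+}1)$st particle is uniformly distributed among the $n$ particles present, independently of the past. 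This is precisely the uniform choice $I_{n+1}$ in the ERW.

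Next I would set up the correspondence between particles and increments. Enumerate the particles in order of birth as $\emptyset = v_1, v_2, v_3, \ldots$, where $v_n$ is the unique particle with $T_{v_n} = \tau_{n-1}$ for $n \geq 2$ (and $v_1 = \emptyset$ born at time $0$). I claim that $X_{v_n}$ plays the role of the increment $S_n - S_{n-1}$. Indeed, $X_{v_1} = X_\emptyset = 1 = S_1 - S_0$, matching the deterministic first step. For $n \geq 2$, the particle $v_{n+1}$ is a child of some particle $v_{J}$ with $J$ uniform on $\{1, \ldots, n\}$ as explained above, and by~\eqref{eqn:position} its position is $X_{v_{n+1}} = X_{v_J} \erm^{i \theta_{v_{n+1}}}$, where $\theta_{v_{n+1}}$ is an independent $[0,2\pi)$-valued variable with the common law of $\theta_1$, independent of everything constructed so far. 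Writing $S'_n := Z_1(\tau_n) = \sum_{m=1}^{n} X_{v_m}$ (using that $\mathcal{N}_{\tau_n} = \{v_1, \ldots, v_n\}$), we get $S'_{n+1} = S'_n + X_{v_{n+1}} = S'_n + X_{v_J}\erm^{i\theta_{v_{n+1}}}$, and since $X_{v_J} = S'_J - S'_{J-1}$, this is exactly the ERW recursion with $I_{n+1} = J$ uniform on $\{1,\ldots,n\}$ independent of the past and $\theta_{n+1} = \theta_{v_{n+1}}$. Thus $(S'_n)_{n \geq 1}$ and $(S_n)_{n \geq 1}$ solve the same recursion driven by random variables with the same joint law, hence are equal in distribution.

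The main point requiring care — really the only nontrivial input — is the claim that, conditionally on $\mathcal{F}_{\tau_n}$ (the history up to the $n$th jump), the parent of the next particle is uniform on the $n$ current particles and the associated angle $\theta_{v_{n+1}}$ is fresh and independent; I would justify this by the competing-exponentials property (the minimum of $n$ i.i.d.\ rate-$1$ exponentials is attained by a uniformly chosen index, independent of its value and of the other residual lifetimes, by the lack-of-memory property), together with the fact that the displacement variables $(\theta_u)$ are i.i.d.\ and independent of the genealogy and birth times, so that revealing the genealogy up to time $\tau_n$ exposes only $\theta_{v_1}, \ldots, \theta_{v_n}$ and leaves $\theta_{v_{n+1}}$ untouched. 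Once this is in place the identification of the recursions is a routine bookkeeping matter, and the lemma follows; it is worth noting that the convention $T_\emptyset = \emptyset$ in the excerpt should read $T_\emptyset = 0$, which is what is used implicitly here.
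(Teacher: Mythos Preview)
Your proof is correct and follows essentially the same approach as the paper: identify the jump-time process $(Z_1(\tau_n))_{n\ge 1}$ with the ERW by noting that the parent of the next-born particle is uniform among the current population (competing exponentials) and that the rotation angle is a fresh independent copy of $\theta_1$. The paper's own proof is a terse two-paragraph version of exactly this argument, so your more careful enumeration $v_1,v_2,\ldots$ and explicit justification of the uniform choice simply fill in details the paper leaves implicit; your remark that $T_\emptyset=\emptyset$ should read $T_\emptyset=0$ is also well taken.
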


\begin{proof}
This result is fairly straightforward and a classical observation in the study of reinforced processes. By construction, there are at time $\tau_n$ exactly $n$ particles in the branching Markov process $X$. Each particle gives birth, independently of one another, to a new child whose position will be given by a random rotation of angle $\theta$ of their own. Therefore, at time $\tau_{n+1}$ exactly one particle gives birth to a child, whose label  is uniformly chosen in $\mathcal{N}_{\tau_n}$.

As a result, we obtain that $Z_1(\tau_{n+1})$ is the sum of $Z_1(\tau_n)$ and a randomly rotated, uniformly chosen, term of the sum defining $Z_1(\tau_n)$. The evolution therefore matches the one of the ERW, which justifies the equality in distribution between the two processes.
\end{proof}

\begin{remark}
With a similar reasoning, and the notation of Lemma~\ref{lem:martingale}, observe that for all $k \geq 1$, we have
\[
 (S_n^{(k)}, n \geq 1) \egaldistr \left(Z_k(\tau_n), n \geq 1\right).\]
\end{remark}

To complete this section, we end by recalling that $N : t \mapsto \#\mathcal{N}_t$ is a standard Yule process, i.e. a continuous-time Markov process that jumps at rate $i$ from $i$ to $i+1$. We mention that $(\erm^{-t} N_t, t \geq 0)$ is a non-negative martingale, which verifies
\begin{equation}
  \label{eqn:sizePop}
  \lim_{t \to \infty} \erm^{-t} N_t = E \quad \text{a.s.}
\end{equation}
Recalling that $N_t$ has a Geometric distribution with parameter $\erm^{-t}$, we observe that $E$ is distributed as a unit-mean exponential random variable, hence is a.s. positive.

\subsection{Convergence of additive functional of the branching Markov process}
\label{subsec:martcv}

We prove in this section the following proposition, that allows us to describe the almost sure asymptotic behaviour of the branching random walk.
\begin{proposition}
\label{prop:cvMart}
If $\Re(\Phi_1) > 1/2$, the martingale $(\erm^{-\Phi_1t} Z_1(t), t \geq 0)$ converges a.s. and in $\L^2$ towards a complex random variable $W$, that satisfies
\begin{equation}
  \label{eqn:momentsOfW}
  \E(W) = 1, \quad \E(W^2) = \frac{2 \Phi_1}{2 \Phi_1 - \Phi_{2}} \quad \text{and} \quad \E(|W|^2) = \frac{2 \Re(\Phi_1)}{2 \Re(\Phi_1)-1}.
\end{equation}
\end{proposition}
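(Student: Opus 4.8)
The plan is to verify that $(\erm^{-\Phi_1 t}Z_1(t), t\geq 0)$ is a martingale, then show it is bounded in $\L^2$ by computing $\E(|Z_1(t)|^2)$ explicitly, and finally identify the moments of the limit $W$ by solving the differential equations that the first and second moments of $Z_1$ satisfy.

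First I would check the martingale property. Conditionally on the configuration at time $t$, each of the particles $u\in\mathcal{N}_t$ independently produces, in an infinitesimal interval $[t,t+\dd t]$, a child at position $X_u\erm^{i\theta}$ with probability $\dd t$; hence
\[
  \E\big(Z_1(t+\dd t)\,\big|\,\mathcal{F}_t\big) = Z_1(t) + \dd t\sum_{u\in\mathcal{N}_t}X_u\,\E(\erm^{i\theta_1}) = Z_1(t)\big(1+\Phi_1\,\dd t\big),
\]
so $\frac{\dd}{\dd t}\E(Z_1(t)\mid\mathcal{F}_t) = \Phi_1 Z_1(t)$, which gives that $\erm^{-\Phi_1 t}Z_1(t)$ is a (complex) martingale and $\E(Z_1(t))=\erm^{\Phi_1 t}$, i.e. $\E(W)=1$. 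The same computation with $Z_1^2$ and $Z_2$: when particle $u$ branches, $Z_1^2$ increases by $2X_u^2\erm^{i\theta}\cdot\frac{Z_1}{X_u}\cdot\frac{1}{?}$— more carefully, $Z_1(t+\dd t)^2 = (Z_1(t)+X_u\erm^{i\theta})^2$ on the event that $u$ branches, so the drift of $Z_1^2$ is $\sum_u(2Z_1 X_u\Phi_1 + X_u^2\Phi_2) = 2\Phi_1 Z_1^2/\,$—wait, $\sum_u X_u\cdot X_u = Z_2$, so the drift is $2\Phi_1 Z_1 \cdot$ (no), it is $2\Phi_1\sum_u Z_1 X_u$? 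No: $\sum_u 2Z_1 X_u = 2Z_1\sum_u X_u = 2Z_1^2$. Hence $\frac{\dd}{\dd t}\E(Z_1^2) = 2\Phi_1\E(Z_1^2) + \Phi_2\E(Z_2)$, and likewise $\frac{\dd}{\dd t}\E(Z_2) = \Phi_2\E(Z_2)$ with $\E(Z_2(0))=1$, giving $\E(Z_2(t))=\erm^{\Phi_2 t}$. Solving the linear ODE for $f(t):=\E(Z_1(t)^2)$ with $f(0)=1$ yields $f(t) = \frac{2\Phi_1}{2\Phi_1-\Phi_2}\erm^{2\Phi_1 t} - \frac{\Phi_2}{2\Phi_1-\Phi_2}\erm^{\Phi_2 t}$; since $\Re(\Phi_2)<1<2\Re(\Phi_1)$ the second term is negligible and $\E((\erm^{-\Phi_1 t}Z_1(t))^2)\to\frac{2\Phi_1}{2\Phi_1-\Phi_2}=\E(W^2)$. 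An identical computation for $g(t):=\E(|Z_1(t)|^2)$ — writing $|Z_1(t+\dd t)|^2 = |Z_1(t)|^2 + 2\Re(\bar{Z_1}X_u\erm^{i\theta}) + 1$ on the branching event — gives $\frac{\dd}{\dd t}g = 2\Re(\Phi_1)g + N$-expectation; but $\E(N_t)=\erm^t$ while $2\Re(\Phi_1)>1$, so $g(t)\sim\frac{2\Re(\Phi_1)}{2\Re(\Phi_1)-1}\erm^{2\Re(\Phi_1)t}$, which both proves $\L^2$-boundedness and identifies $\E(|W|^2)=\frac{2\Re(\Phi_1)}{2\Re(\Phi_1)-1}$.

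Once $\L^2$-boundedness is established, $\L^2$-convergence of $\erm^{-\Phi_1 t}Z_1(t)$ to some $W$ follows from the martingale convergence theorem applied to the real and imaginary parts (or Doob's inequality); the moments of $W$ pass to the limit because $\L^2$-convergence gives convergence of $\E(|\cdot|^2)$ and, since $\erm^{-2\Phi_1 t}Z_1(t)^2$ is a uniformly integrable family (being $\L^2$-bounded after one more power, or simply because $Z_1(t)^2$ times the deterministic factor converges in $\L^1$), convergence of $\E((\cdot)^2)$. The almost sure convergence I would get either from the general theory of $\L^2$-bounded martingales in continuous time (right-continuity of $t\mapsto Z_1(t)$, which is piecewise constant with jumps at the $T_u$), or by restricting to the embedded discrete-time martingale $(\erm^{-\Phi_1\tau_n}Z_1(\tau_n))_n$ and using that between consecutive jump times the process only changes by the deterministic factor, which is controlled since $\tau_n - \log n$ converges a.s.

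The main obstacle is controlling the \emph{second moment $\E(|Z_1(t)|^2)$ rather than $\E(|Z_1(t)|)^2$}: the na\"\i ve bound using $|Z_1(t)|\le N_t$ only gives $g(t)\lesssim \E(N_t^2)\asymp \erm^{2t}$, which is far too weak (it would not even give $\L^2$-boundedness unless $\Re(\Phi_1)>1$). The point is that the cross terms $\E(\Re(\bar X_u X_v\erm^{i(\theta_u-\theta_v)}))$ for distinct particles do not simply accumulate: one must set up the exact ODE for $g(t)=\E(|Z_1(t)|^2)$, whose inhomogeneous term is exactly $\E(N_t)=\erm^t$ (each branching event contributes the $+1$ from $|X_u\erm^{i\theta}|^2=1$), and exploit that the homogeneous growth rate $2\Re(\Phi_1)$ strictly exceeds $1$ precisely in the regime under consideration. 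This is the same phenomenon — and the same computation — underlying the $\Re(\Phi_1)>1/2$ case of Lemma~\ref{lem:martingale}, so I would either redo it in the branching picture or invoke Lemma~\ref{lem:martingale} together with Lemma~\ref{lem:obvious} and the a.s. asymptotics $N_t\sim E\erm^t$ to transfer the estimate.
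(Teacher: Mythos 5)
Your proposal is correct and follows essentially the same route as the paper: establish the martingale property from the branching structure, compute $\E(Z_1(t))$, $\E(Z_1(t)^2)$ and $\E(|Z_1(t)|^2)$ in closed form (your ODEs and their solutions coincide with Lemma~\ref{lem:secondMoment}), deduce $\L^2$-boundedness hence a.s. and $\L^2$ convergence, and identify the moments of $W$ by passing to the limit. The only (minor) difference is that you derive the moment ODEs from the forward/generator equation at time $t$, whereas the paper obtains the same equations by decomposing at the first branching time; both yield identical computations and conclusions.
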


\begin{remark}
\label{rem:mmts}
From \eqref{eqn:momentsOfW}, one can compute mean and covariance matrix of the vector $(A,B) := (\Re(W), \Im(W))$. We have $\E(A) = 1$, $\E(B) = 0$, and writing $\Sigma = \left( \begin{array}{cc}  \sigma^2 & \rho \\ \rho & \tau^2 \end{array} \right)$ the covariance matrix of that vector, we have
\begin{equation}
\begin{split}
  \sigma^2 = \frac{1}{2}\left( \frac{1}{2 \Re(\Phi_1)-1} + \Re\left(\frac{\Phi_2}{2 \Phi_1 - \Phi_{2}}  \right)\right), \quad \tau^2 &= \frac{1}{2}\left(\frac{1}{2  \Re(\Phi_1)-1} - \Re\left(\frac{\Phi_2}{2 \Phi_1 - \Phi_{2}}  \right)\right)\\
  \text{and }\quad \rho &= \Im\left(\frac{\Phi_1}{2 \Phi_1 - \Phi_{2}}\right).
\end{split}
\end{equation}
\end{remark}

To prove Proposition~\ref{prop:cvMart}, it will be sufficient to show that $(\erm^{-\Phi_1 t} Z_1(t), t \geq 0)$ is a martingale and is bounded in $\L^2$, then to study the asymptotic behaviour of the moments of this process. To this end, we compute the first two moments of this additive functional.
\begin{lemma}
\label{lem:firstMoment}
For all $k \in \N$, we have $\E(Z_k(t)) = \erm^{t \Phi_k}$.
\end{lemma}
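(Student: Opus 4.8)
The plan is to compute $\E(Z_k(t))$ by conditioning on the structure of the branching process, using the multiplicative form of the positions $X_u^k = \prod_{j=1}^{|u|} \erm^{ik\theta_{u_j}}$ together with the independence of the displacement angles $(\theta_u)$ from the genealogy. First I would observe that, since the $\theta_u$ are i.i.d. and independent of $\cN_t$, we have $\E(X_u^k \mid \cN_t) = \Phi_k^{|u|}$ for each $u \in \cN_t$, so that
\[
  \E(Z_k(t)) = \E\!\left( \sum_{u \in \cN_t} \Phi_k^{|u|} \right) = \sum_{n \geq 0} \Phi_k^n \, \E\!\left( \#\{u \in \cN_t : |u| = n\} \right).
\]
So the task reduces to computing the expected number of particles in generation $n$ alive at time $t$, i.e. $m_n(t) := \E(\#\{u \in \cN_t : |u| = n\})$.

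Next I would compute $m_n(t)$. Since each particle gives birth to children at the atoms of an independent rate-$1$ Poisson process, the birth-time $T_u$ of a generation-$n$ particle is a sum of $n$ i.i.d. $\mathrm{Exp}(1)$ variables, hence $\Gamma(n,1)$-distributed, and the number of generation-$n$ particles born by time $t$ satisfies $m_n(t) = \IP(\text{Gamma}(n,1) \leq t) \cdot (\text{branching factor})$; more carefully, a clean way is to note $m_0(t) = 1$ and $m_n(t) = \int_0^t m_{n-1}(s)\,\dd s$ because a generation-$n$ particle alive at $t$ is a child, born at some time $s \le t$, of a generation-$(n-1)$ particle that was alive at time $s$, and each such parent produces children at unit rate. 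Solving this recursion gives $m_n(t) = t^n/n!$. Alternatively, one recognizes $m_n(t)$ as the coefficients in the expansion of the total population $\E(N_t) = \erm^t = \sum_n t^n/n!$, which is consistent. Substituting back,
\[
  \E(Z_k(t)) = \sum_{n \geq 0} \Phi_k^n \frac{t^n}{n!} = \erm^{t\Phi_k}.
\]

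An alternative, perhaps cleaner, route is to set $f_k(t) = \E(Z_k(t))$ and derive an ODE directly: on the interval $[t, t+\dd t]$, each of the (on average $f_k(t)/\Phi_k^{\,\cdot}$ --- no, better) particles $u$ present contributes an extra child $u'$ with $X_{u'}^k = X_u^k \erm^{ik\theta_{u'}}$ at rate $1$, so $\tfrac{\dd}{\dd t}\E(Z_k(t)) = \E\big(\sum_{u \in \cN_t} X_u^k \Phi_k\big) = \Phi_k f_k(t)$, with $f_k(0) = X_\emptyset^k = 1$, yielding $f_k(t) = \erm^{t\Phi_k}$. This is the shortest argument and I would likely present it, making the infinitesimal generator computation rigorous by the usual first-step / Poisson-process decomposition. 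There is no real obstacle here — the statement is essentially a bookkeeping identity — so the only care needed is to justify the interchange of expectation and the (absolutely convergent, since $|\Phi_k| \le 1$ and $\sum_n t^n/n! < \infty$) sum over generations, or equivalently to justify differentiating under the expectation; this is routine given the a.s. finiteness and integrability of $N_t = \#\cN_t$.
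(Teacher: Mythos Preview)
Your proof is correct, and your main route --- conditioning on the genealogy to get $\E(X_u^k \mid \cN_t) = \Phi_k^{|u|}$, then computing $m_n(t) := \E(\#\{u \in \cN_t : |u|=n\}) = t^n/n!$ via the recursion $m_n(t)=\int_0^t m_{n-1}(s)\,\dd s$ and summing the resulting exponential series --- is genuinely different from the paper's argument. The paper instead decomposes at the \emph{first} branching time $\tau_2 \sim \mathrm{Exp}(1)$: on $\{\tau_2 \le t\}$ one has $Z_k(t) \egaldistr Z'_k(t-\tau_2) + X_1^k Z''_k(t-\tau_2)$, which yields the renewal equation $u(t) = \erm^{-t} + (1+\Phi_k)\int_0^t \erm^{-s} u(t-s)\,\dd s$ and hence the linear ODE $(\erm^t u)' = (1+\Phi_k)\erm^t u$. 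Your generation-by-generation computation is more combinatorial and arguably more transparent for the first moment, exploiting directly the independence of the angles from the birth times; the paper's first-branching decomposition is the one that extends cleanly to the second-moment calculations in the next lemma, where the same trick produces a solvable inhomogeneous ODE. Your alternative infinitesimal-generator sketch leads to the same ODE $u' = \Phi_k u$ that the paper's renewal equation reduces to, just without the intermediate integral equation.
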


\begin{proof}
Let us first observe that $Z_k$ is integrable, as by triangular inequality, we have $|Z_k(t)| \leq N_t$, with $\E(N_t) = \erm^t < \infty$.

We write $u(t) = \E(Z_k(t))$. We compute this quantity by decomposing the branching process at the first splitting time. Recall that $\tau_2$ the first branching time is distributed as a standard Exponential random variable, and $X_1$ is the position of the first child of the initial individual, we observe that on $\{\tau_2< t\}$, we have
\begin{equation}
  \label{eqn:branchSplitting}
  Z_k(t) \egaldistr Z'_k(t -\tau_2) + X_1^k Z''_k(t-\tau_2),
\end{equation}
where $Z'_k$ and $Z''_k$ are two independent copies of $Z_k$. This formula follows from the branching property (evolution of particles after their birth is independent from the rest of the process) and the product formula for the position of particles \eqref{eqn:position}.

This equation therefore implies
\begin{align*}
  u(t) &= \IP(\tau_2 > t) + \E(1 + X_1^k)\E\left(Z'_k(t-\tau_1) \ind{t > \tau_1} \right)\\
  &= \erm^{-t} + (1 + \Phi_k) \int_0^t \erm^{-s} u(t-s) \dd s = \erm^{-t} + (1+\Phi_k) \erm^{-t} \int_0^t \erm^r u(r) \dd r.
\end{align*}
As a consequence, $t \mapsto \erm^t u(t)$ is a solution of the linear differential equation $y' = (1 + \Phi_k) y$, with initial condition $u(0)=1$, from which we immediately complete the proof.
\end{proof}

The second moment of $Z_k$ is computed in a similar fashion. This result should be compared with Lemmas~\ref{lem:quad-complex} and \ref{lem:quad-mod} in the previous section.
\begin{lemma}
\label{lem:secondMoment}
Let $t \geq 0$ and $k \geq 1$, we have
\[
  \E(|Z_k(t)|^2) = \begin{cases}
   \frac{2 \Re(\Phi_k)}{2 \Re(\Phi_k)-1} \erm^{2 \Re(\Phi_k) t} - \frac{1}{2\Re(\Phi_k) - 1} \erm^t  & \text{if $\,\Re(\Phi_k) \neq 1/2$}\\
   (t+1) \erm^{t}  & \text{if $\,\Re(\Phi_k) = 1/2$},
  \end{cases}
\]
as well as
\[
  \E(Z_k(t)^2) = \begin{cases}
   \frac{2 \Phi_k}{2 \Phi_k - \Phi_{2k}} \erm^{2 \Phi_k t} - \frac{\Phi_{2k}}{2 \Phi_k - \Phi_{2k}} \erm^{\Phi_{2k}t} & \text{if $\,2 \Phi_k \neq \Phi_{2k}$}\\
   (1 + 2 \Phi_k t)\erm^{2 \Phi_k t} & \text{if $\,2 \Phi_k = \Phi_{2k}$.}
  \end{cases}
\]
\end{lemma}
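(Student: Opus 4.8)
The plan is to reproduce, at the level of second moments, the first-moment argument of Lemma~\ref{lem:firstMoment}: decompose the branching Markov process at its first branching time $\tau_2$, which is a standard exponential random variable. Set $v_k(t) = \E(|Z_k(t)|^2)$ and $w_k(t) = \E(Z_k(t)^2)$; both are finite and locally bounded since $|Z_k(t)| \leq N_t$ and $N_t$ is geometric, hence has moments of all orders. On $\{\tau_2 > t\}$ the process consists of the root particle at position $X_\emptyset = 1$, so $Z_k(t) = 1$; on $\{\tau_2 = s < t\}$, the branching property together with the product formula \eqref{eqn:position} gives, exactly as in \eqref{eqn:branchSplitting},
\[
  Z_k(t) \egaldistr Z'_k(t-s) + X_1^k\, Z''_k(t-s),
\]
with $Z'_k, Z''_k$ independent copies of $Z_k$ and $X_1 = \erm^{i\theta_1}$ independent of both.

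For the modulus, since $|X_1^k| = 1$, expanding $|Z_k(t)|^2 = |Z'_k|^2 + |Z''_k|^2 + 2\Re\!\big(X_1^k Z''_k \overline{Z'_k}\big)$ and taking expectations — using $\E(Z_k(s)) = \erm^{s\Phi_k}$ from Lemma~\ref{lem:firstMoment}, $\E(X_1^k) = \Phi_k$, independence, and the fact that $\erm^{(t-s)\Phi_k}\erm^{(t-s)\overline{\Phi_k}} = \erm^{2\Re(\Phi_k)(t-s)}$ is real positive — yields
\[
  \E\big(|Z_k(t)|^2 \mid \tau_2 = s < t\big) = 2\, v_k(t-s) + 2\Re(\Phi_k)\,\erm^{2\Re(\Phi_k)(t-s)}.
\]
Conditioning on $\tau_2 \sim \mathrm{Exp}(1)$ and substituting $r = t-s$ gives the renewal-type identity
\[
  \erm^t v_k(t) = 1 + \int_0^t \Big( 2\,\erm^r v_k(r) + 2\Re(\Phi_k)\,\erm^{(2\Re(\Phi_k)+1)r} \Big)\,\dd r .
\]
Writing $g(t) = \erm^t v_k(t)$, this is equivalent to the linear ODE $g'(t) = 2 g(t) + 2\Re(\Phi_k)\erm^{(2\Re(\Phi_k)+1)t}$ with $g(0) = 1$. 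Solving it — with particular solution $\tfrac{2\Re(\Phi_k)}{2\Re(\Phi_k)-1}\erm^{(2\Re(\Phi_k)+1)t}$ when $\Re(\Phi_k)\neq 1/2$, and a resonant particular solution of the form $t\,\erm^{2t}$ when $\Re(\Phi_k) = 1/2$ — and dividing by $\erm^t$ gives the two displayed cases for $\E(|Z_k(t)|^2)$.

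The computation of $w_k$ has the same structure: expanding $Z_k(t)^2 = (Z'_k)^2 + X_1^{2k}(Z''_k)^2 + 2 X_1^k Z'_k Z''_k$ on $\{\tau_2 = s < t\}$ introduces $\E(X_1^{2k}) = \Phi_{2k}$ from the squared-offspring term and $\E(X_1^k) = \Phi_k$ from the cross term, so that
\[
  \E\big(Z_k(t)^2 \mid \tau_2 = s < t\big) = (1 + \Phi_{2k})\,w_k(t-s) + 2\Phi_k\,\erm^{2\Phi_k(t-s)},
\]
and $h(t) := \erm^t w_k(t)$ solves $h'(t) = (1+\Phi_{2k})h(t) + 2\Phi_k\erm^{(2\Phi_k+1)t}$ with $h(0) = 1$; the dichotomy is now whether $2\Phi_k = \Phi_{2k}$ (resonance, giving a $t\,\erm^{2\Phi_k t}$ term) or not. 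I expect no genuine obstacle here: the only points requiring care are the bookkeeping of complex conjugates in the modulus expansion, the justification that one may condition on $\tau_2$ and differentiate the resulting integral equation (routine, since $v_k$ and $w_k$ are bounded on compacts by $\E(N_t^2) < \infty$), and treating correctly the two resonant boundary cases $\Re(\Phi_k) = 1/2$ and $2\Phi_k = \Phi_{2k}$.
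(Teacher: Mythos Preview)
Your argument is correct and follows the paper's proof essentially line for line: both decompose at the first branching time via \eqref{eqn:branchSplitting}, expand the squared modulus and the square to obtain the same integral equations for $\erm^t v_k(t)$ and $\erm^t w_k(t)$, and then solve the resulting linear first-order ODEs with the same case split on resonance. Your write-up is in fact slightly more careful than the paper's in flagging the finiteness of $v_k,w_k$ and the resonant particular solutions.
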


\begin{proof}
We compute these second moments in the same way we computed the first, by using \eqref{eqn:branchSplitting} to identify a differential equation they satisfy. Indeed, decomposing at the first branching time, we have
\begin{align*}
  v(t) &:= \E(|Z_k(t)|^2) = \E(Z_k(t) \bar{Z_k(t)})\\
  &= \erm^{-t} + \int_0^t \erm^{-s} \E\left( \big( Z'_k(t-s) + X_1^k Z''_k(t-s) \big)  \bar{\left(  Z'_k(t-s) + X_1^k Z''_k(t-s)\right)}\right)  \dd s\\
  &= \erm^{-t} + \int_0^t \erm^{-s} 2 v(t-s) + 2 \Re(\Phi_k) |\E(Z_k(t-s))|^2 \dd s.
\end{align*}
As a result, using Lemma~\ref{lem:firstMoment} with time reversal in the integral, we have
\[
  \erm^t v(t) = 1 + \int_0^s 2 \erm^s \left( v(s) + 2 \Re(\Phi_k) \left| \erm^{\Phi_k s} \right|^2 \right) \dd s.
\]
Therefore, $t \mapsto \erm^{t}v(t)$ is the solution of $y'(t) = 2 y(t) + 2 \Re(\Phi_k) \erm^{1 + 2 \Re(\Phi_k) t}$ with initial condition $y(0) = 1$. As a result, we conclude that, if $\Re(\Phi_k) \neq \frac{1}{2}$, we have
\[
  v(t) = \frac{2 \Re(\Phi_k)}{2 \Re(\Phi_k)-1} \erm^{2 \Re(\Phi_k) t} - \frac{1}{2\Re(\Phi_k) - 1} \erm^t
\]
while $v(t) = (t+1) \erm^t$ if $\Re(\Phi_k) = \frac{1}{2}$, completing the first part of the proof.

The second part of the proof follows in an identical fashion. We have
\begin{align*}
  w(t) &:= \E(Z_k(t)^2) \\
  &= \erm^{-t} + \int_0^t \erm^{-s} \E\left( \left( Z'_k(t-s) + X_1^k Z''_k(t-s) \right)^2\right)  \dd s\\
  &= \erm^{-t} + \int_0^t \erm^{-s} \left((1 + \Phi_{2k}) w(t-s) + 2 \Phi_k \E(Z_k(t-s))^2\right) \dd s.
\end{align*}
Using again Lemma~\ref{lem:firstMoment} with time reversal in the integral, we have
\[
  \erm^t w(t) = 1 + \int_0^t (1 + \Phi_{2k}) \erm^s v(s) + 2 \Phi_k \erm^{s (1 + 2 \Phi_k)} \dd s,
\]
which solving the differential equation yields
\[
  w(t) = \frac{2 \Phi_k}{2 \Phi_k - \Phi_{2k}} \erm^{2 \Phi_k t} - \frac{\Phi_{2k}}{2 \Phi_k - \Phi_{2k}} \erm^{\Phi_{2k}t}
\]
if $\Phi_{2k} \neq 2 \Phi_k$, while $w(t) = (1 + 2 \Phi_k t)\erm^{2 \Phi_k t}$ otherwise.
\end{proof}

We are now able to prove the main result of the section.
\begin{proof}[Proof of Proposition~\ref{prop:cvMart}]
Using the branching property, we observe that for all $s \leq t$, we have
\[
  Z_1(t+s) \egaldistr \sum_{u \in \mathcal{N}_t} X_u Z^{(u)}_1(s),
\]
where $(Z^{(u)}_1(s), u \in \mathcal{N}_t)$ are i.i.d. copies of $Z_1(s)$ that are independent from the branching Markov process. Therefore, writing $(\mathcal{F}_t)$ the natural time filtration of $X$, we have
\[
  \E(Z_1(t+s) | \mathcal{F}_t) = Z_1(t) \E(Z_1(s)) = Z_1(t) \erm^{\Phi_1 s},
\]
from which we deduce that $(\erm^{-\Phi_1 t} Z_1(t), t \geq 0)$ is a martingale.

Let us now assume that $\Re(\Phi_1)> \frac{1}{2}$. In this situation, by Lemma~\ref{lem:secondMoment}, we have
\[
  \lim_{t \to \infty} \E\left( \left|Z_1(t) \erm^{-\Phi_1 t}\right|^2 \right) = \frac{2 \Re(\Phi_1)}{2 \Re(\Phi_1) - 1},
\]
which shows in particular that the martingale is bounded in $\L^2$, therefore that it converges a.s. and in $\L^2$ to a random variable $W$. Moreover, as $2 \Re(\Phi_1) > 1 > \Re(\Phi_2)$, we also obtain
\[
  \lim_{t \to \infty} \E\left( \left(Z_1(t) \erm^{-\Phi_1 t}\right)^2 \right) = \frac{2 \Phi_1}{2 \Phi_1 - 1}.
\]
We use these two convergences to identify the moments of $W$.
\end{proof}

\subsection{Rate of convergence of the additive martingale}
\label{subsec:clt}

We assume in this section that $\Re(\Phi_1) > 1/2$, therefore that $\erm^{-\Phi_1 t} Z_1(t)$ converges a.s. to a non-degenerate random variable $W$ as proved in Proposition~\ref{prop:cvMart}. The aim of this section is to determine the rate of this convergence. More precisely, we write
\[
  R_t = \erm^{-t/2} \left( Z_1(t) - \erm^{\Phi_1 t} W \right),
\]
we show in the next proposition that conditionally on $E$ (the almost-sure limit of $\erm^{-t}\#\mathcal{N}_t$ defined in \eqref{eqn:sizePop}), $R_t$ converges in law to a complex normal random variable.

\begin{proposition}
\label{prop:mixedCLT}
Under the previous assumptions and notation, for all $(\lambda,\mu) \in \R^2$, we have
\[
  \lim_{t \to \infty} \E\left( \erm^{i \lambda \Re(R_{t}) + i \mu \Im(R_{t})} \middle| \mathcal{F}_t \right) = \exp\left( - \frac{E}{4}  \frac{1}{2 \Re(\Phi_1) - 1} (\lambda^2+ \mu^2) \right) \quad \text{a.s.}
\]
\end{proposition}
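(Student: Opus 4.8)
The plan is to run the classical central limit theorem for the limit of an additive martingale of a branching process: condition on $\mathcal{F}_t$, write $R_t$ as a sum of a large number of independent small pieces, and compute the limit of the conditional characteristic function by a Lindeberg-type expansion. All convergences below are almost sure; I write $\zeta = \lambda + i\mu$, so that $\lambda\Re(R_t) + \mu\Im(R_t) = \Re(\bar{\zeta}R_t)$, and recall from \eqref{eqn:sizePop} that $\erm^{-t}N_t \to E$. The first step is a branching decomposition at time $t$. Given $\mathcal{F}_t$, each particle $u \in \mathcal{N}_t$ generates an independent copy of the whole branching Markov process through those descendants $v$ whose ancestral line from $u$ (exclusive) to $v$ uses only particles born after time $t$; this follows from the branching property together with the memorylessness at the deterministic time $t$ of the Poisson processes $\Xi_u$. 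Writing $\mathcal{D}_u$ for this set of descendants, so that $\mathcal{N}_{t+s} = \bigsqcup_{u \in \mathcal{N}_t}\mathcal{D}_u$ with $u$ the youngest ancestor in $\mathcal{N}_t$ of each $v \in \mathcal{D}_u$, one gets $Z_1(t+s) = \sum_{u \in \mathcal{N}_t} X_u Z_1^{(u)}(s)$ where, conditionally on $\mathcal{F}_t$, the $(Z_1^{(u)}(\cdot))_{u \in \mathcal{N}_t}$ are i.i.d.\ copies of $Z_1(\cdot)$. Letting $s \to \infty$ in $\erm^{-\Phi_1(t+s)}Z_1(t+s)$ and using Proposition~\ref{prop:cvMart} gives $W = \erm^{-\Phi_1 t}\sum_{u \in \mathcal{N}_t} X_u W^{(u)}$ with $(W^{(u)})_{u \in \mathcal{N}_t}$ i.i.d.\ copies of $W$ (jointly with the above family) independent of $\mathcal{F}_t$. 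Since $Z_1(t) = \sum_{u \in \mathcal{N}_t} X_u$, subtracting yields
\begin{equation*}
  R_t = \erm^{-t/2}\bigl(Z_1(t) - \erm^{\Phi_1 t}W\bigr) = \erm^{-t/2}\sum_{u \in \mathcal{N}_t} X_u\bigl(1 - W^{(u)}\bigr),
\end{equation*}
in which, given $\mathcal{F}_t$, the $X_u$ are fixed unit complex numbers and the $W^{(u)}$ are i.i.d.\ copies of $W$.

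Next I would expand the conditional characteristic function. Setting $g(\eta) = \E\bigl(\erm^{i\Re(\bar{\eta}(1-W))}\bigr)$, the identity above and conditional independence give $\E\bigl(\erm^{i\Re(\bar{\zeta}R_t)} \mid \mathcal{F}_t\bigr) = \prod_{u \in \mathcal{N}_t} g\bigl(\erm^{-t/2}\bar{X_u}\zeta\bigr)$. By Proposition~\ref{prop:cvMart}, $\E(1-W) = 0$, $\E(|1-W|^2) = \frac{1}{2\Re(\Phi_1)-1}$ and $\E((1-W)^2) = \frac{\Phi_2}{2\Phi_1-\Phi_2}$ are finite, so a second order Taylor expansion of $\eta \mapsto \erm^{i\Re(\bar{\eta}(1-W))}$, together with $\E\bigl((\Re(\bar{\eta}(1-W)))^2\bigr) = \tfrac12|\eta|^2\E(|1-W|^2) + \tfrac12\Re\bigl(\bar{\eta}^2\E((1-W)^2)\bigr)$, gives
\begin{equation*}
  g(\eta) = 1 - \tfrac14|\eta|^2\E(|1-W|^2) - \tfrac14\Re\bigl(\bar{\eta}^2\E((1-W)^2)\bigr) + |\eta|^2\varepsilon(\eta),
\end{equation*}
with $\varepsilon$ bounded and $\varepsilon(\eta) \to 0$ as $\eta \to 0$. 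Applying this to $\eta_u = \erm^{-t/2}\bar{X_u}\zeta$ (so $|\eta_u|^2 = \erm^{-t}|\zeta|^2$ and $\bar{\eta_u}^2 = \erm^{-t}X_u^2\bar{\zeta}^2$) and summing over $\mathcal{N}_t$,
\begin{equation*}
  \sum_{u \in \mathcal{N}_t}\bigl(1 - g(\eta_u)\bigr) = \tfrac14(\erm^{-t}N_t)|\zeta|^2\E(|1-W|^2) + \tfrac14\Re\bigl(\bar{\zeta}^2\E((1-W)^2)\,\erm^{-t}Z_2(t)\bigr) + (\erm^{-t}N_t)|\zeta|^2 r_t,
\end{equation*}
with $|r_t| \leq \max_{u \in \mathcal{N}_t}|\varepsilon(\eta_u)| \to 0$ since $\max_u|\eta_u| = \erm^{-t/2}|\zeta| \to 0$. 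Here $\erm^{-t}N_t \to E$ by \eqref{eqn:sizePop} and $\erm^{-t}Z_2(t) \to 0$ (next paragraph), while $\sum_{u \in \mathcal{N}_t}|1 - g(\eta_u)|^2 = O\bigl((\erm^{-t}N_t)\erm^{-t}\bigr) \to 0$; hence $\log\prod_u g(\eta_u) = -\sum_u(1 - g(\eta_u)) + o(1)$, which gives $\lim_{t \to \infty}\E\bigl(\erm^{i\Re(\bar{\zeta}R_t)} \mid \mathcal{F}_t\bigr) = \exp\bigl(-\frac{E}{4}\,\frac{|\zeta|^2}{2\Re(\Phi_1)-1}\bigr)$, i.e.\ the claim since $|\zeta|^2 = \lambda^2 + \mu^2$.

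It remains to prove $\erm^{-t}Z_2(t) \to 0$ a.s. The process $(\erm^{-\Phi_2 t}Z_2(t))_{t \geq 0}$ is a complex martingale (same computation as in Proposition~\ref{prop:cvMart}, using $\E(Z_2(s)) = \erm^{\Phi_2 s}$), and Lemma~\ref{lem:secondMoment} provides $\E(|Z_2(t)|^2) = O\bigl((t+1)\erm^{\max(2\Re(\Phi_2),1)t}\bigr)$. Writing $|\erm^{-t}Z_2(t)| = \erm^{(\Re(\Phi_2)-1)t}|\erm^{-\Phi_2 t}Z_2(t)|$, applying Doob's $\L^2$ maximal inequality on $[0,n+1]$ to the martingale, and using $\Re(\Phi_2) < 1$ (which holds under \eqref{eqn:Nunidimensional}), one checks that $\sum_n \E\bigl(\sup_{t \in [n,n+1]}|\erm^{-t}Z_2(t)|^2\bigr) < \infty$, so $\erm^{-t}Z_2(t) \to 0$ a.s.\ by Borel--Cantelli.

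The bulk of the argument is the routine Lindeberg bookkeeping of the second paragraph. The two points requiring care are the conditional i.i.d.\ decomposition of the first paragraph — one must split $\mathcal{N}_{t+s}$ according to the youngest ancestor lying in $\mathcal{N}_t$, rather than naively, and invoke memorylessness of the $\Xi_u$ at the fixed time $t$ — and the fact that the control of $\erm^{-t}Z_2(t)$ (which encodes the circular symmetry of the limit, i.e.\ the vanishing of the pseudo-variance term $\E((1-W)^2)$) is needed in the almost sure rather than in-probability sense, which is what forces the martingale-plus-maximal-inequality estimate of the third paragraph. I expect the former to be the main conceptual obstacle and the latter the main technical one.
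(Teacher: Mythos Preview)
Your proof is correct and follows the same approach as the paper: the branching decomposition $R_t=\erm^{-t/2}\sum_{u\in\mathcal{N}_t}X_u(1-W^{(u)})$, a second-order expansion of the conditional characteristic function producing the terms $\erm^{-t}N_t$ and $\erm^{-t}Z_2(t)$, and the observation that the latter vanishes thanks to \eqref{eqn:Nunidimensional}. The one refinement is that you upgrade the control of $\erm^{-t}Z_2(t)$ from convergence in probability (which is all the paper actually establishes, via Lemma~\ref{lem:secondMoment}) to almost sure convergence using Doob's maximal inequality and Borel--Cantelli; this is indeed what the almost-sure conclusion of the proposition strictly requires, so your argument is slightly more complete on that point.
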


Before turning to the proof of this result, let us mention as an immediate consequence the joint convergence in distribution of $R_t$ and $\erm^{-t}\mathcal{N}_t$.

\begin{corollary}
\label{cor:obvious}
Assuming that $\Re(\Phi_1) > 1/2$, let $G\sim \mathcal{N}_\C(0, \frac{1}{2 \Re(\Phi_1) - 1})$ independent of the branching Markov process, we have
\[
  \lim_{t \to \infty} (\erm^{-t} \#\mathcal{N}_t, R_t) = (E, \sqrt{E}N) \quad \text{ in distribution}.
\]
\end{corollary}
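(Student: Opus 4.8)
The plan is to deduce the joint convergence directly from Proposition~\ref{prop:mixedCLT} together with \eqref{eqn:sizePop}, via Lévy's continuity theorem. Write $\sigma^2 = \frac{1}{2 \Re(\Phi_1) - 1}$. First I would fix $(\alpha, \lambda, \mu) \in \R^3$ and compute the limit of the joint characteristic function
\[
  \E\left( \exp\left( i \alpha \erm^{-t} \#\mathcal{N}_t + i \lambda \Re(R_t) + i \mu \Im(R_t) \right) \right).
\]
Since $\erm^{-t}\#\mathcal{N}_t = \erm^{-t} N_t$ is $\mathcal{F}_t$-measurable, conditioning on $\mathcal{F}_t$ rewrites this expectation as
\[
  \E\left( \exp\left( i \alpha \erm^{-t} N_t \right) \cdot \E\left( \exp\left( i \lambda \Re(R_t) + i \mu \Im(R_t) \right) \middle| \mathcal{F}_t \right) \right).
\]
By \eqref{eqn:sizePop}, the first factor converges a.s. to $\erm^{i \alpha E}$; by Proposition~\ref{prop:mixedCLT}, the conditional expectation converges a.s. to $\exp\left( - \frac{E}{4} \sigma^2 (\lambda^2 + \mu^2) \right)$. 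Both factors are bounded in modulus by $1$, so dominated convergence gives
\[
  \lim_{t \to \infty} \E\left( \exp\left( i \alpha \erm^{-t} N_t + i \lambda \Re(R_t) + i \mu \Im(R_t) \right) \right) = \E\left( \exp\left( i \alpha E - \tfrac{E}{4} \sigma^2 (\lambda^2 + \mu^2) \right) \right).
\]

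It then remains to recognise the right-hand side as the characteristic function of $(E, \sqrt{E} N)$, where $N \sim \mathcal{N}_\C(0, \sigma^2)$ is independent of the branching Markov process, hence of $E$. Conditioning on $E$ and using that $\Re(N)$ and $\Im(N)$ are independent centred real Gaussians with variance $\sigma^2/2$ independent of $E$, one has
\[
  \E\left( \exp\left( i \alpha E + i \lambda \sqrt{E} \Re(N) + i \mu \sqrt{E} \Im(N) \right) \right) = \E\left( \exp\left( i \alpha E \right) \erm^{- \frac{E \sigma^2}{4} (\lambda^2 + \mu^2)} \right),
\]
which is exactly the limit obtained above. Lévy's continuity theorem on $\R^3$ then yields the claimed convergence in distribution.

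I do not expect a genuine obstacle here: the statement is indeed "immediate" given Proposition~\ref{prop:mixedCLT}. The only two points requiring a modicum of care are the legitimacy of exchanging limit and expectation — which is handled by the uniform bound $\lvert\,\cdot\,\rvert \le 1$ on characteristic functions together with the a.s.\ convergence supplied by Proposition~\ref{prop:mixedCLT} and \eqref{eqn:sizePop} — and the elementary Gaussian computation identifying the limit with the characteristic function of the mixture $(E, \sqrt{E} N)$.
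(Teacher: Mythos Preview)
Your argument is correct and matches the paper's approach exactly: the paper states that the corollary ``is immediate by computing the joint Fourier transform of this random vector, and applying dominated convergence theorem,'' which is precisely what you have written out in detail. The conditioning on $\mathcal{F}_t$, the a.s.\ convergence from Proposition~\ref{prop:mixedCLT} and \eqref{eqn:sizePop}, the dominated convergence step, and the identification of the limiting characteristic function are all exactly as intended.
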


This result is immediate by computing the joint Fourier transform of this random vector, and applying dominated convergence theorem.

\begin{proof}[Proof of Proposition~\ref{prop:mixedCLT}]
We first remark, using again the branching property that for all $s,t \geq 0$,
\[
  \erm^{-\Phi_1 (t+s)} Z_1(t+s) \egaldistr \erm^{- \Phi_1 t}\sum_{u \in \mathcal{N}_t} X_u Z_1^{(u)}(s)\erm^{-\Phi_1 s}.
\]
Therefore, letting $s \to \infty$ in the above expression, we conclude that for all $t \geq 0$,
\begin{equation}
  \label{eqn:smoothingTransform}
  \erm^{\Phi_1 t} W \egaldistr \sum_{u \in \mathcal{N}_t} \erm^{\Phi_1 t} X_u W^{(u)},
\end{equation}
where $(W^{(u)}, u \in \mathcal{N}_t)$ are i.i.d. copies of $W$ further independent of $\mathcal{F}_t$. In particular, we can rewrite
\[
  R_{t} = \sum_{u \in \mathcal{N}_t} \erm^{-t/2}X_u (1 - W^{(u)}).
\]

As a consequence,
\begin{align*}
  \lambda \Re(R_t) + \mu \Im(R_t)
  &\egaldistr \sum_{u \in \mathcal{N}_t} \lambda (\Re(X_u) \Re(1-W^{(u)}) - \Im(X_u) \Im(1-W^{(u)}))\\
  &\qquad \qquad \qquad + \mu (\Re(X_u) \Im(1-W^{(u)}) + \Im(X_u) \Re(1-W^{(u)})) \\
  &= \sum_{u \in \mathcal{N}_t} \Re(1-W^{(u)}) (\lambda \Re(X_u) + \mu \Im(X)) + \Im(1-W^{(u)})(-\lambda \Im(X_u) + \mu \Re(X_u)),
\end{align*}
yielding
\begin{equation}
  \label{eqn:fourierTransform}
  \E\left( \erm^{i \lambda \Re(R_{t}) + i \mu \Im(R_{t})} \middle| \mathcal{F}_t \right) = \prod_{u \in \mathcal{N}_t} \phi(\erm^{-t/2} (i\lambda + \mu) X_u),
\end{equation}
where, for $z \in \C$, we write
\[
  \phi(z) = \E\left( \erm^{i \Re(z) \Re(1-W) + i \Im(z) \Im(1-W)} \right).
\]

Using the first two moments of W (see Proposition~\ref{prop:cvMart} and Remark \ref{rem:mmts}), we observe that for all $M > 0$, uniformly in $|z| \leq M$ we have
\[
  \phi(z \erm^{-t/2}) = 1 - \frac{\erm^{-t}(1+o(1))}{2}\left(\Re(z)^2 \sigma^2 + 2 \Re(z) \Im(z) \rho + \Im(z)^2 \tau^2\right).
\]
Let $(\lambda,\mu) \in \R^2$, we write $z = \frac{i \lambda + \mu}{\sqrt{\lambda^2 + \mu^2}}$, so that $|z| = 1$. Then, for all $u \in \mathcal{N}_t$, we have $|z X_u| = 1$, which allows us to use the double-angle formulae:
\[
  \Re(z X_u)^2 = \frac{1 + \Re((zX_u)^2)}{2}, \quad \Im(z X_u)^2 = \frac{1 - \Re((zX_u)^2)}{2} \quad \text{and} \quad \Re(z X_u)\Im(zX_u) = \frac{\Im((zX_u)^2)}{2}.
\]
Consequently, we obtain from \eqref{eqn:fourierTransform}
\begin{multline}
  \E\left( \erm^{i \lambda \Re(R_{t}) + i \mu \Im(R_{t})} \middle| \mathcal{F}_t \right)\\
  = \exp\left( \frac{\erm^{-t}(1+o(1))}{4}(\lambda^2 + \mu^2) \left((\sigma^2 + \tau^2)\#\mathcal{N}_t + (\sigma^2 - \tau^2)\Re(z Z_2(t)) + 2 \rho \Im(z Z_2(t)) \right) \right).
\end{multline}

Let us now recall from \eqref{eqn:Nunidimensional} that $\Re(\Phi_2) < 1$, thus from a straightforward application of Lemma~\ref{lem:secondMoment}, we deduce that $Z_2(t)\erm^{-t}$ converges to $0$ in probability. As a result, we obtain
\[
  \lim_{t \to \infty} \E\left( \erm^{i \lambda \Re(R_{t}) + i \mu \Im(R_{t})} \middle| \mathcal{F}_t \right) = \exp\left( - \frac{E}{4}(\lambda^2 + \mu^2) (\sigma^2 + \tau^2) \right).
\]
The proof is now complete using that $\sigma^2 + \tau^2 = \Var(\Re(W_u)) + \Var(\Im(W_u)) = \E(|W_u|^2) - 1$ and Proposition~\ref{prop:cvMart}.
\end{proof}

We now complete the proof of Theorem~\ref{thm:main}.

\begin{proof}[Proof of \eqref{eqn:mainSup}]
Using Lemma~\ref{lem:obvious}, we can write
\[
  \frac{S_n}{n^{\Re(\Phi_1)}}\erm^{-i \Im(\Phi_1) \log n} = Z_1(\tau_n) \exp\left( - \Phi_1 \log N_{\tau_n} \right),
\]
recalling that $N_t = \#\mathcal{N}_t$, hence $N_{\tau_n} = n$ a.s. By Proposition~\ref{prop:cvMart} and \eqref{eqn:sizePop}, we have
\[
  Z_1(t) \erm^{-\Phi_1 \log N_t} = Z_1(t) \erm^{-\Phi_1 t} \exp(-\Phi_1 \log (N_t \erm^{-t})) \to W \exp(-\Phi_1 \log E) \quad \text{a.s.}
\]
Using that $\tau_n\to \infty$ a.s., we conclude that
\[
  \lim_{n \to \infty}   \frac{S_n}{n^{\Re(\Phi_1)}}\erm^{-i \Im(\Phi_1) \log n} = W \exp(-\Phi_1 \log E) \quad \text{a.s.}
\]

We now turn to the central limit theorem result, writing
\begin{align*}
  &\phantom{=}\frac{S_n - n^{\Re(\Phi) \erm^{i \Im(\Phi) \log n}}  W \exp(-\Phi_1 \log E) }{\sqrt{n}}\\
  &= \frac{Z_1(\tau_n) - W \exp(\Phi_1 \log (N_{\tau_n}/E))}{\sqrt{n}}\\
  &= \erm^{-\tau_n/2}\frac{Z_1(\tau_n) - W \erm^{\Phi_1 \tau_n} + W \erm^{\Phi_1 \tau_n} - W \exp(\Phi_1 \log (N_{\tau_n}/E)}{\sqrt{N_{\tau_n} \erm^{-\tau_n}}}.
\end{align*}
By Corollary~\ref{cor:obvious}, we observe that
\[
  \lim_{t \to \infty} \erm^{-t/2}\frac{Z_1(t) - W \erm^{\Phi_1 t}}{\sqrt{N_t \erm^{-t}}} = \mathcal{N}_\C(0,\frac{1}{2 \Re(\Phi_1) - 1}) \quad \text{in law},
\]
from which we deduce straightforwardly that $\erm^{-\tau_n/2}\frac{Z_1(\tau_n) - W \erm^{\Phi_1 \tau_n} }{\sqrt{N_{\tau_n} \erm^{-\tau_n}}}$ admit the same limit in distribution as $n \to \infty$, using that $\tau_n - \log n$ is tight. In addition, we have
\[
  \frac{\erm^{\Phi_1 t} - \erm^{\Phi_1 \log (N_t / E)}}{\sqrt{N_t}} = \frac{1+o(1)}{\sqrt{E}}\erm^{(\Phi_1 - 1/2)t}(1 - \erm^{\Phi_1 \log (N_t \erm^{-t}/E)}) \quad \text{a.s.}
\]
As $ 1/2 < \Re(\Phi_1)<1$ and $\lim_{t \to \infty} \frac{N_t - E \erm^{t}}{\erm^{t/2}}$ converges in distribution towards a Gaussian random variable, we conclude that this quantity converges to $0$ in probability.

As a result, from Slutsky's lemma, we deduce the second part of \eqref{eqn:mainSup}.
\end{proof}

\end{document}